\newtheorem{thm}{Theorem}[section]
\newtheorem{lma}{Lemma}[section]
\newtheorem{prop}{Proposition}[section]
\theoremstyle{definition}
\theoremstyle{remark}
\newtheorem{remark}{Remark}[section]
\numberwithin{equation}{section}
\newcommand{\tr}{\mbox{tr}}
\renewcommand{\div}{\mbox{div}}
\newcommand{\Ric}{\mbox{Ric}}
\newcommand{\R}{\mathbb R}
\newcommand{\be}{\begin{equation}}
\newcommand{\ee}{\end{equation}}
\newcommand{\bee}{\begin{equation*}}
\newcommand{\eee}{\end{equation*}}
\def\p{\partial}
\def\la{\langle}
\def\ra{\rangle}
\def\lf{\left}
\def\ri{\right}
\def\Pi{\displaystyle{\mathbb{II}}}
\def\S{\Sigma}
\def\vh{\vspace{.2cm}}
\def\l{\lambda}
\def\a{\alpha}
\def\ringA{\accentset{\circ}{A}}
\def\dggz{d \gamma_{g_0}}
\begin{document}
\title[]
{Some functionals on compact manifolds with boundary}

\author{Pengzi Miao$^1$}
\address[Pengzi Miao]{Department of Mathematics, University of Miami, Coral Gables, FL 33146, USA.}
\email{pengzim@math.miami.edu}
\thanks{$^1$Research partially supported by Simons Foundation Collaboration Grant for Mathematicians \#281105.}

\author{Luen-Fai Tam$^2$}
\address[Luen-Fai Tam]{The Institute of Mathematical Sciences and Department of
 Mathematics, The Chinese University of Hong Kong, Shatin, Hong Kong, China.}
 \email{lftam@math.cuhk.edu.hk}
\thanks{$^2$Research partially supported by Hong Kong RGC General Research Fund \#CUHK 14305114}

\renewcommand{\subjclassname}{
  \textup{2010} Mathematics Subject Classification}
\subjclass[2010]{Primary 53C20; Secondary 83C99}

\date{February, 2016}

\begin{abstract}
We analyze critical points of two functionals of Riemannian metrics on compact manifolds with boundary.
These functionals are motivated by
formulae of the mass functionals
 of asymptotically flat and asymptotically
hyperbolic manifolds
\end{abstract}

\keywords{Scalar curvature, static metrics}

\maketitle

\markboth{Pengzi Miao and Luen-Fai Tam}{Some functionals on compact manifolds with boundary}

\section{introduction}\label{s-intro}
In this work we discuss critical  points
of two functionals defined on the space of  Riemannian metrics  on a compact manifold with  boundary.
Let $\Omega$ be an $n$-dimensional, connected, compact  manifold with smooth  boundary $ \Sigma$
   which itself is not necessarily connected.
Given a   Riemannian metric $g$ on $\Omega$,  let
\be\label{e-Gg}
G^g_\lambda :=\Ric(g)-\frac12\lf[ \mathcal{S}_g-\lambda(n-1)(n-2)\ri]g ,
\ee
where $\Ric(g)$, $\mathcal{S}_g$  are the Ricci curvature,  the scalar curvature of $g$, respectively,
and $\lambda \in \{ 0, 1 , -1\}$.
Let $X$ be a vector field on $\Omega$.
 Define
\be\label{e-F}
\mathcal{F}_X (g):=\int_\S G^g_\lambda  (X,\nu_g)d\gamma_g,
\ee
where  $\nu_g$ is the outward unit normal to $ \Sigma$ in $(\Omega, g)$ and $ d \gamma_g$ denotes  the volume element
 on  $\Sigma$ induced by $g$.

We will be interested in  $ \mathcal{F}_X (\cdot)$ constrained to  the space of constant scalar curvature metrics on $\Omega$.
To be precise,  we assume that  $g_0$ is a  given, smooth metric on $ \Omega$ such that $\mathcal{S}_{g_0} = \l n(n-1)$.
If $\lambda > 0$, we assume that the first Dirichlet eigenvalue of $(n-1)\Delta_{g_0}+\mathcal{S}_{g_0}$ is positive.
Let $ \gamma$ be  the metric on $ \Sigma$ induced from $ g_0$.
Define
\bee \label{e-spaces}
   \begin{split}
     \mathcal{M}^\lambda:=  & \{\text{$g$ is a smooth Riemannian metric on  $\Omega$  with $\mathcal{S}_{g} = \lambda n(n-1)$}\},  \\
     \mathcal{M}_\gamma^\lambda:=&\{\text{$g\in \mathcal{M}^\lambda $  such that $g|_{T(\Sigma)}= g_0 | _{T (\Sigma)} $}\}, \\
       \mathcal{M}^\lambda_0:=&\{\text{$g\in \mathcal{M}^\lambda $  such that $g=g_0$ everywhere at  $\Sigma$}\}.
 \end{split}
 \eee
Here, in the definition of $ \mathcal{M}^\lambda_\gamma$, $ T (\Sigma)$ denotes the tangent bundle of $ \Sigma$.
Evidently,  $g_0 \in \mathcal{M}^\lambda$ and $ \mathcal{M}^\lambda_0\subset \mathcal{M}^\lambda_\gamma \subset \mathcal{M}^\lambda$.

To state our result, we will assume  $ X$ is a {\em conformal Killing} vector field on $(\Omega, g_0)$, see {\bf (a2)} in section \ref{s-conformal}.
Under this assumption, it is known (cf. \cite{Herzlich}) that
$$ \text{$g_0$ is an Einstein metric}  \Rightarrow  \div_{g_0} X  \in \mathrm{Ker}  (  D \mathcal{S}_{g_0}^* ) ,$$
where ``$\div_{g_0} $"  is the divergence with respect to $g_0$ and
$ \mathrm{Ker}  (  D \mathcal{S}_{g_0}^* ) $ denotes the kernel of the operator $  D \mathcal{S}_{g_0}^* (\cdot)$,
which is  the formal $L^2$-adjoint of the linearization of the scalar curvature map
$\mathcal{S}$ at $g_0$, i.e.
$$ D \mathcal{S}_{g_0}^* (f) = - (\Delta_{g_0} f )g_0 + \nabla^2_{g_0} f -  f \Ric(g_0)  $$
for any  function $f$.
In a relativistic context, a nontrivial solution $f$ to  $ D \mathcal{S}_{g_0}^* (f) = 0 $ is often
referred as a static potential.
Motivated by the work of Corvino \cite{Corvino} on localized scalar curvature deformation,
we seek a variational  characterization of
 the condition  ``$\div_{g_0} X  \in \mathrm{Ker}  (  D \mathcal{S}_{g_0}^* )$",
 which is part of  the following theorem.

\begin{thm}\label{t-main-1}
Let $(\Omega,g_0)$,  $\mathcal{M}^\lambda$,   $ \mathcal{M}^\lambda_\gamma $ and $  \mathcal{M}^\lambda_0$
be described as above.
Suppose
 $X$ is a conformal Killing vector field on $(\Omega, g_0)$.
 Let $ \{ g(t) \}_{| t | < \epsilon  }$ be a smooth path of metrics on $\Omega$ such that $ g(0) = g_0$.
 Then
 \begin{enumerate}
   \item [(i)]
   $ \displaystyle{ \frac{d}{dt}\mathcal{F}_X(g(t))|_{t=0}=0} $
   for all path $\{ g(t) \} \subset   \mathcal{M}_0^\lambda$ if and only if $\div_{g_0}X\in \mathrm{Ker}(D\mathcal{S}^*_{g_0} (\cdot) )$.

   \item [(ii)]    $ \displaystyle{ \frac{d}{dt}\mathcal{F}_X (g(t))|_{t=0}=0} $
   for all  path $\{ g(t) \} \subset \mathcal{M}_\gamma^\lambda$ if and only if $\div_{g_0}X\in \mathrm{Ker}(D\mathcal{S}^*_{g_0} (\cdot) )$
   and $\nu_{g_0}$ is an eigenvector of  $\la X,\nu_{g_0}\ra G^{g_0}$ with zero eigenvalue at every point on $\Sigma$.

   \item [(iii)] If  $g_0$ is Einstein, then $ \displaystyle{ \frac{d}{dt}\mathcal{F}_X (g(t))|_{t=0}=0} $
   for all path $ \{ g(t) \} \subset  \mathcal{M}^\lambda$. The converse is true if
    there is a function $u$ such that
       $ \div_{g_0} X - X(u) $ has a fixed  sign on a dense subset
    in $\Omega$, which is satisfied if
 $ \div_{g_0} X$ does not vanish  on $\Omega$.
 \end{enumerate}
 \end{thm}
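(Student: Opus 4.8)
The plan is to turn the boundary functional into a bulk integral and differentiate. Since the contracted second Bianchi identity gives $\div_{g_0}G^{g_0}_\lambda=0$, the divergence theorem yields, for every metric $g$,
\[
\mathcal{F}_X(g)=\frac12\int_\Omega\langle G^g_\lambda,\mathcal{L}_X g\rangle_g\,dV_g .
\]
Writing $h=g'(0)$, $\phi=\tfrac1n\div_{g_0}X$ (so the conformal Killing hypothesis reads $\mathcal{L}_X g_0=2\phi g_0$) and $f=\div_{g_0}X$, I would differentiate this at $t=0$. Three facts enter: the algebraic identity $\tr_{g_0}G^{g_0}_\lambda=0$ on $\mathcal{M}^\lambda$ (whence $\tr_{g_0}(DG_{g_0}(h))=\langle h,G^{g_0}_\lambda\rangle+(1-\tfrac n2)D\mathcal{S}_{g_0}(h)$); the constraint $D\mathcal{S}_{g_0}(h)=0$ valid along any path in $\mathcal{M}^\lambda$ (hence in $\mathcal{M}^\lambda_\gamma,\mathcal{M}^\lambda_0$); and the conformal-Killing identity $\mathcal{L}_X\Ric(g_0)=-(n-2)\Hess_{g_0}\phi-(\Delta_{g_0}\phi)g_0$. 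After one integration by parts (which produces the sole boundary term) and the cancellation of the terms carrying the antisymmetric part of $\nabla_{g_0}X$, I expect the master formula, with $V:=D\mathcal{S}^*_{g_0}(f)$,
\[
\frac{d}{dt}\mathcal{F}_X(g(t))\Big|_{t=0}=\int_\Omega\Big(\tfrac{n-2}{2n}\langle h,V\rangle-\tfrac{1}{2n}(\tr_{g_0}h)(\tr_{g_0}V)\Big)dV_{g_0}+\tfrac12\int_\Sigma\langle X,\nu_{g_0}\rangle\,\langle G^{g_0}_\lambda,h\rangle\,d\gamma_{g_0}.
\]
For $n\ge3$ the map $V\mapsto\tfrac{n-2}{2n}V-\tfrac1{2n}(\tr_{g_0}V)g_0$ is a pointwise linear isomorphism of symmetric $2$-tensors, so its vanishing is equivalent to $V=0$. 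Deriving this formula is the central computation.

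\emph{Parts (i) and (ii).} Along a path in $\mathcal{M}^\lambda_0$ we have $h|_\Sigma=0$, so the boundary integral drops and the derivative is the bulk term. If $f\in\mathrm{Ker}(D\mathcal{S}^*_{g_0})$ then $V=0$ and the derivative vanishes. Conversely, vanishing for all admissible $h$ makes the bulk integrand $L^2$-orthogonal to $\{h:D\mathcal{S}_{g_0}(h)=0,\ h|_\Sigma=0\}$; I would invoke Corvino's local surjectivity of $D\mathcal{S}_{g_0}$ (using positivity of the first Dirichlet eigenvalue of $(n-1)\Delta_{g_0}+\mathcal{S}_{g_0}$ when $\lambda>0$ to exclude obstructions) to produce interior, compactly supported $h$ in the constraint kernel detecting any nonzero $V$, forcing $V=0$. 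For (ii), first restricting to $\mathcal{M}^\lambda_0\subset\mathcal{M}^\lambda_\gamma$ gives $V=0$ as above; the remaining derivative is the boundary integral, and since on $\mathcal{M}^\lambda_\gamma$ the tangential part of $h$ vanishes while the normal components $h(\nu_{g_0},\cdot)$ are free, $\langle G^{g_0}_\lambda,h\rangle|_\Sigma$ is governed by $G^{g_0}_\lambda(\nu_{g_0},\cdot)$. Its integral against $\langle X,\nu_{g_0}\rangle$ vanishes for all such $h$ precisely when $\langle X,\nu_{g_0}\rangle G^{g_0}_\lambda(\nu_{g_0},\cdot)=0$, i.e. $\nu_{g_0}$ is a null eigenvector of $\langle X,\nu_{g_0}\rangle G^{g_0}_\lambda$, as asserted.

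\emph{Part (iii).} If $g_0$ is Einstein then $G^{g_0}_\lambda=\Ric(g_0)-\lambda(n-1)g_0=0$, killing the boundary term, and by the cited result of Herzlich $f\in\mathrm{Ker}(D\mathcal{S}^*_{g_0})$, so $V=0$ and the bulk term vanishes; hence the derivative is zero along every path in $\mathcal{M}^\lambda$. For the converse, vanishing for all $h$ (no boundary restriction) yields, as in (i)--(ii), both $V=D\mathcal{S}^*_{g_0}(f)=0$ in $\Omega$ and $\langle X,\nu_{g_0}\rangle G^{g_0}_\lambda=0$ on $\Sigma$. Feeding $D\mathcal{S}^*_{g_0}(f)=0$ (which gives $\Hess_{g_0}\phi=\phi(\Ric(g_0)-\lambda n\,g_0)$ and $\Delta_{g_0}\phi=-\lambda n\phi$) back into the conformal-Killing identity produces the transport equation $\mathcal{L}_X G^{g_0}_\lambda=-(n-2)\phi\,G^{g_0}_\lambda$, equivalently $X(|G^{g_0}_\lambda|^2)=-2(\div_{g_0}X)\,|G^{g_0}_\lambda|^2$. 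For any function $u$, multiplying by $e^u$ and integrating by parts gives $\int_\Omega e^u(\div_{g_0}X-X(u))\,|G^{g_0}_\lambda|^2\,dV_{g_0}=0$, the boundary contribution vanishing since $\langle X,\nu_{g_0}\rangle G^{g_0}_\lambda=0$ forces $\langle X,\nu_{g_0}\rangle|G^{g_0}_\lambda|^2=0$ on $\Sigma$. If $\div_{g_0}X-X(u)$ has a fixed sign on a dense set the integrand has a fixed sign, so $|G^{g_0}_\lambda|^2\equiv0$, i.e. $g_0$ is Einstein; taking $u$ constant recovers the case where $\div_{g_0}X$ never vanishes.

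\emph{Main obstacle.} The principal difficulty is the first variation: checking that the antisymmetric part of $\nabla_{g_0}X$ cancels and that the surviving bulk integrand is exactly an invertible image of $D\mathcal{S}^*_{g_0}(\div_{g_0}X)$, which is what singles out $\div_{g_0}X\in\mathrm{Ker}(D\mathcal{S}^*_{g_0})$ as the correct condition. A secondary obstacle is the converse in (i): passing from ``the integral vanishes for all admissible $h$'' to ``$V=0$'' relies on Corvino-type surjectivity of the linearized scalar curvature on the constrained class, where the eigenvalue hypothesis for $\lambda>0$ is used.
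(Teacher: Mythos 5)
Your skeleton matches the paper's: rewriting $\mathcal{F}_X$ as a bulk integral via the contracted Bianchi identity, differentiating with the conformal Killing identity for $L_X\Ric(g_0)$ (this is the paper's Lemma 2.1 and Lemma 3.1, and your master formula agrees with theirs on constraint paths), and in (iii) the transport equation $X(|G^{g_0}_\lambda|^2)=-2(\div_{g_0}X)|G^{g_0}_\lambda|^2$ integrated against $e^{u}$ (the paper's Proposition 2.1). But there is a genuine gap in your converse of (i), and the converses of (ii) and (iii) inherit it. You propose to test the bulk term against \emph{compactly supported} $h$ with $D\mathcal{S}_{g_0}(h)=0$, produced by Corvino-type surjectivity. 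This cannot detect $V=D\mathcal{S}^*_{g_0}(\div_{g_0}X)$: for any compactly supported $h$ in the constraint kernel, integration by parts has no boundary contributions, so
\[
\int_\Omega \la D\mathcal{S}^*_{g_0}(\div_{g_0}X),\,h\ra_{g_0}\, dV_{g_0}
=\int_\Omega (\div_{g_0}X)\, D\mathcal{S}_{g_0}(h)\, dV_{g_0}=0
\]
identically, whatever $V$ is; the same holds for the trace term in your formula. In other words, anything in the image of $D\mathcal{S}^*_{g_0}$ is automatically $L^2$-orthogonal to your test class, so orthogonality yields no information, and $V=0$ does not follow. Worse, Corvino's local surjectivity requires $\mathrm{Ker}(D\mathcal{S}^*_{g_0})$ to be trivial on the domain in question, which is the opposite of the conclusion you are after (that $\div_{g_0}X$ is a nontrivial kernel element).

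The paper's mechanism, which you are missing, is different: take an \emph{arbitrary, unconstrained} compactly supported $\hat h$ and push it into $\mathcal{M}^\lambda_0$ by the conformal method, $g(t)=u_t^{4/(n-2)}(g_0+t\hat h)$ with $u_t=1$ on $\Sigma$ (\cite[Proposition 1]{MiaoTam08}; this is where the positive first Dirichlet eigenvalue hypothesis for $\lambda>0$ actually enters, not in any Corvino argument). Then $h=\frac{4}{n-2}v\,g_0+\hat h$ with unknown $v$, and the decisive point is the paper's Lemma 2.1(i): $\tr_{g_0}\lf(D\mathcal{S}^*_{g_0}(\div_{g_0}X)\ri)=0$ for a conformal Killing field, so the conformal part $v\,g_0$ pairs to zero and one is left with $\int_\Omega\la V,\hat h\ra_{g_0}\,dV_{g_0}=0$ for \emph{all} compactly supported $\hat h$, forcing $V=0$ pointwise. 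Notice your own master formula secretly contains this fact --- your extra term $-\frac1{2n}(\tr_{g_0}h)(\tr_{g_0}V)$ vanishes identically because $\tr_{g_0}V=0$ --- but you neither prove that identity nor exploit it, and it is exactly what makes the conformal correction invisible. Once (i) is repaired this way, your boundary analysis in (ii) (testing with $\hat h$ satisfying $\hat h|_{T(\Sigma)}=0$, which survives the conformal correction since $v=0$ on $\Sigma$, to get $\la X,\nu_{g_0}\ra G^{g_0}_\lambda(\nu_{g_0},\cdot)=0$) and your integration argument in (iii) proceed essentially as in the paper.
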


The use  of the functional $\mathcal{F}_X (\cdot) $ in Theorem \ref{t-main-1} is largely motivated by
corresponding expressions  of the  mass  functionals  of  asymptotically flat   and  asymptotically hyperbolic manifolds.
On an asymptotically flat manifold $(M^n, g)$, the ADM mass \cite{ADM61}  $\mathfrak{m}_{_{ADM}}$
can be computed by
\be \label{eq-mass-AF}
\mathfrak{m}_{_{ADM}}=  c_n \lim_{r\to\infty}\int_{S_r}G^g_0(X,\nu_g) d\gamma ,
\ee
where $c_n < 0$ is a dimensional constant, $S_r$ denotes the  coordinate sphere of radius $r$
in the asymptotically flat end of $(M, g)$,  and
$$
X = \sum_{i=1}^n x^i\frac{\p}{\p x^i}
$$
is the conformal Killing vector field on the  Euclidean space $\R^n$.
For the origin and the proof of \eqref{eq-mass-AF}, we refer readers to \cite{Ashtekar-Hansen, Chrusciel,
Huang-ICCM, MiaoTam2015, Herzlich, WangWu2015}.
On an asymptotically hyperbolic manifold, formulae analogous to \eqref{eq-mass-AF},
which compute the mass functionals introduced in \cite{Chrusciel-Herzlich},  can be found in \cite{Herzlich}.

The second functional that we are interested in is also related to  \eqref{eq-mass-AF}.
Indeed, it is not hard to see that one can replace $ X = \sum_i  x^i\frac{\p}{\p x^i} $ in \eqref{eq-mass-AF} by
$ X = r \nu_g$ (cf. \cite{MiaoTamXie16}) to obtain
\be \label{eq-mass-AF-2}
\begin{split}
\mathfrak{m}_{_{ADM}}
=  & \  c_n  \lim_{r\to\infty}   \int_{S_r} r \lf[ \Ric (g) - \frac12 \mathcal{S}_g \ri]  ( \nu_g ,\nu_g) d\gamma \\
=  & \  c_n  \lim_{r\to\infty}  \int_{S_r}  r ( H_2 - H_2^0)  d\gamma ,
\end{split}
\ee
where one has also used    the Gauss equation and $H_2$, $H_2^0$ denote  the second order mean curvature of
$S_r$ in $(M, g)$, $ \R^n$, respectively. (Here one assumes
that  $S_r $ can be isometrically embedded in $ \R^n$ which is  always satisfied  when $n=3$).
We recall that the second order mean curvature $H_2$  of  a hyersurface $S $ in a Riemannian manifold $(M, g)$
 is given by
\be\label{e-2ndmean-def}
H_2:=\sum_{1 \le \a<\beta  \le n-1}\kappa_\a\kappa_\beta=\frac12\lf(H^2-|A|^2\ri),
\ee
where $ \{ \kappa_\a \}_{1 \le \a \le n-1}$, $ A$ and $H$
are  the principal curvatures, the second fundamental form and the (first order) mean curvature
of $ S $ in $(M, g)$, respectively.
For a generalization of \eqref{eq-mass-AF-2}, we refer readers to Theorem 1.2 in  \cite{MiaoTamXie16}.

Motivated by \eqref{eq-mass-AF-2},  given a nontrivial function $ \phi $ on $ \Sigma = \p \Omega$, we
define
 \be
\mathcal{E}_\phi(g) : =\int_\Sigma \phi\, H_2(g) d\gamma ,
\ee
where $ H_2 (g) $ is the second order mean curvature of $\Sigma$ in $(\Omega, g)$ with resect to $\nu_g$.
Regarding $\mathcal{E}_\phi (\cdot)$, we have


\begin{thm} \label{t-main-2}
Let $(\Omega,g_0)$ and    $ \mathcal{M}^\lambda_\gamma $
be described as above.  Let $H_{g_0}$ be the mean curvature of $\Sigma$ in $(\Omega, g_0)$.
Let $ \{ g(t) \}_{| t | < \epsilon  }$ denote  a smooth path of metrics on $\Omega$ such that $ g(0) = g_0$.
Then
$$ \frac{d}{dt} \mathcal{E}_\phi ( g(t) ) |_{t=0} = 0  \ \ \text{for all $\{ g(t) \} \subset \mathcal{M}^\lambda_\gamma $} $$
if and only if
 the following conditions hold
\begin{enumerate}
  \item [(i)] $\phi\, H_{g_0}$ is the boundary value of some $N\in \mathrm{Ker}(D\mathcal{S}^*_{g_0})$.
  \item [(ii)] $\Sigma$ is umbilical at every point $p\in \Sigma$ where $\phi(p)\neq0$.
\end{enumerate}
\end{thm}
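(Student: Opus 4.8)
The plan is to reduce $\frac{d}{dt}\mathcal{E}_\phi(g(t))|_{t=0}$ to a linear boundary functional of $h:=\frac{d}{dt}g(t)|_{t=0}$ and then to read off the vanishing condition through the duality between $D\mathcal{S}_{g_0}$ and $D\mathcal{S}^*_{g_0}$. Since every $g(t)\in\mathcal{M}^\lambda_\gamma$ induces the fixed metric $\gamma$ on $\Sigma$, the variation satisfies $h|_{T\Sigma}=0$, and the area element $d\gamma$, the intrinsic scalar curvature $\mathcal{S}_\Sigma$, and the ambient scalar curvature $\mathcal{S}_g\equiv\lambda n(n-1)$ are all unchanged, so $\frac{d}{dt}\mathcal{E}_\phi(g(t))|_{t=0}=\int_\Sigma\phi\,\delta H_2\,d\gamma$. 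Using the traced Gauss equation $2H_2=\mathcal{S}_\Sigma-\mathcal{S}_g+2\Ric(g)(\nu,\nu)$ together with the fixed quantities, one has $\delta H_2=\delta[\Ric(\nu,\nu)]$ for admissible $h$; equivalently, since $\gamma$ is fixed there is the identity $\delta H_2=\la T_1,\delta A\ra$, where $A$ is the second fundamental form of $\Sigma$ in $(\Omega,g_0)$, $\nu=\nu_{g_0}$, and $T_1:=H_{g_0}\gamma-A$ is the Newton tensor. I would use the latter form, as it involves only the $1$-jet of $h$ along $\Sigma$. First I compute $\delta A$: the variation of the unit normal, forced by $g(\nu,\nu)=1$ and $g(\nu,e_i)=0$, is $\dot\nu=-\tfrac12 h(\nu,\nu)\,\nu-(h(\nu,\cdot))^\sharp$, and a direct computation using $h|_{T\Sigma}=0$ gives $\delta A_{ij}=\tfrac12\nabla_\nu h_{ij}$ plus tangential derivatives of $h(\nu,e_i)$ and algebraic terms in $h(\nu,\nu)$ and $h(\nu,e_i)$; in particular the only normal-derivative contribution is the principal term $\tfrac12\nabla_\nu h_{ij}$.

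The second ingredient is the integration-by-parts identity for the scalar curvature operator:
\be
\int_\Omega\big[f\,D\mathcal{S}_{g_0}(h)-\la D\mathcal{S}^*_{g_0}(f),h\ra\big]\,dV_{g_0}=\int_\Sigma B(f,h)\,d\gamma,
\ee
where $B(f,h)=f\big[(\div_{g_0}h)(\nu)-\nabla_\nu(\tr_{g_0}h)\big]+(\tr_{g_0}h)\nabla_\nu f-h(\nabla f,\nu)$. For any $N\in\mathrm{Ker}(D\mathcal{S}^*_{g_0})$ and any variation tangent to $\mathcal{M}^\lambda_\gamma$ (so that $D\mathcal{S}_{g_0}(h)=0$) the left side vanishes, hence $\int_\Sigma B(N,h)\,d\gamma=0$. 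On $\Sigma$ one has $\tr_{g_0}h=h(\nu,\nu)$, and the only term of $B(N,h)$ involving the normal derivative $\nabla_\nu h_{ij}$ of the tangential components is $-N\gamma^{ij}\nabla_\nu h_{ij}$, arising from $-N\nabla_\nu(\tr_{g_0}h)$; thus this coefficient is pure trace.

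The heart of the argument is to characterise when the $1$-jet functional $\ell(h):=\int_\Sigma\phi\,\la T_1,\delta A\ra\,d\gamma$ vanishes on all $h$ with $D\mathcal{S}_{g_0}(h)=0$ and $h|_{T\Sigma}=0$. I would invoke the Corvino-type solvability theory for the underdetermined operator $D\mathcal{S}_{g_0}$ (cf. \cite{Corvino}): its range, under the boundary condition $h|_{T\Sigma}=0$, is the annihilator of $\mathrm{Ker}(D\mathcal{S}^*_{g_0})$ realised through the pairing $B$, so that the admissible $1$-jets along $\Sigma$ are precisely those annihilated by every functional $h\mapsto\int_\Sigma B(N,h)\,d\gamma$, $N\in\mathrm{Ker}(D\mathcal{S}^*_{g_0})$. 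Consequently $\ell$ vanishes on the constrained space if and only if $\ell=\int_\Sigma B(N,\cdot)\,d\gamma$ for some $N\in\mathrm{Ker}(D\mathcal{S}^*_{g_0})$, as functionals on $\{h:\,h|_{T\Sigma}=0\}$. I expect this duality step to be the main obstacle, since it requires the precise solvability statement (surjectivity modulo the static kernel, with control of the Cauchy data of $h$ on $\Sigma$) together with a closed-range argument for the finite-dimensional space $\mathrm{Ker}(D\mathcal{S}^*_{g_0})$.

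Finally I would match $\ell$ against $\int_\Sigma B(N,\cdot)\,d\gamma$ coefficient by coefficient in the $1$-jet of $h$. Equating the coefficients of the free datum $\nabla_\nu h_{ij}$ forces $\tfrac12\phi\,T_1^{ij}=-N\gamma^{ij}$ on $\Sigma$. Its trace-free part reads $\phi\,\ringA=0$, i.e. $\Sigma$ is umbilical at every point where $\phi\neq0$, which is condition (ii); its trace part shows that $\phi H_{g_0}$ equals, up to a nonzero dimensional constant, the boundary value $N|_\Sigma$, and since $\mathrm{Ker}(D\mathcal{S}^*_{g_0})$ is a vector space this says exactly that $\phi H_{g_0}$ is the boundary value of some element of $\mathrm{Ker}(D\mathcal{S}^*_{g_0})$, which is condition (i). The remaining coefficients, in $h(\nu,\nu)$ and $h(\nu,e_i)$, then determine $\nabla_\nu N$ along $\Sigma$ and are automatically consistent once (i) and (ii) hold, because under umbilicity the lower-order terms of $\delta A$ collapse into expressions already produced by $B(N,\cdot)$. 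For the converse one runs this backwards: taking $N$ from (i) and using (ii), the identity $\ell(h)=\int_\Sigma B(N,h)\,d\gamma=\int_\Omega N\,D\mathcal{S}_{g_0}(h)\,dV_{g_0}$ holds, and its right side vanishes for admissible $h$, giving $\frac{d}{dt}\mathcal{E}_\phi|_{t=0}=0$.
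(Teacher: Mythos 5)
Your variational formula and your final coefficient matching are sound: with $\gamma$ fixed along the path one indeed has $\delta H_2=\la T_1,\delta A\ra_\gamma$ with $T_1=H_{g_0}\gamma-A$, the trace-free part of the matching produces condition (ii) and the trace part produces condition (i) (the sign and the dimensional constant are harmless since $\mathrm{Ker}(D\mathcal{S}^*_{g_0})$ is a vector space). Your converse direction is also essentially the paper's: under (ii) the integrand collapses to $\frac{n-2}{n-1}\phi H_{g_0}H'(0)$, and under (i) its integral vanishes by the identity $\int_\Omega w\, D\mathcal{S}_{g_0}(h)\,dV_{g_0}=\int_\Omega\la D\mathcal{S}^*_{g_0}(w),h\ra_{g_0}\,dV_{g_0}-2\int_\Sigma w\, DH_{g_0}(h)\,d\gamma_{g_0}$, valid when $h|_{T(\Sigma)}=0$, i.e.\ by \cite[Th.~2.1]{MiaoShiTam09}.

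The genuine gap is in the ``only if'' direction, precisely at the step you flag as the main obstacle and then leave unproved: the claim that the $1$-jets along $\Sigma$ realized by variations tangent to $\mathcal{M}^\lambda_\gamma$ are \emph{exactly} the annihilator of the functionals $h\mapsto\int_\Sigma B(N,h)\,d\gamma$, $N\in\mathrm{Ker}(D\mathcal{S}^*_{g_0})$. This does not follow from \cite{Corvino}: Corvino's solvability theory concerns compactly supported interior deformations, with obstruction given by the $L^2$ pairing against static potentials; it contains no surjectivity statement with prescribed Cauchy data of $h$ on $\Sigma$ together with the side condition $h|_{T(\Sigma)}=0$, which is what your annihilator characterization requires. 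Moreover, even granting such a linearized solvability theorem, criticality in Theorem \ref{t-main-2} is tested only along genuine paths inside the nonlinear set $\mathcal{M}^\lambda_\gamma$, so you would still need to show that the linearized solutions you produce are tangent to actual constant-scalar-curvature paths inducing $\gamma$. The paper avoids both difficulties with one elementary device, \cite[Proposition~1]{MiaoTam08} (this is where the eigenvalue hypothesis for $\lambda>0$ enters): for any $\hat h$ it produces genuine paths $g(t)=u_t^{4/(n-2)}(g_0+t\hat h)\in\mathcal{M}^\lambda_\gamma$ whose velocity equals $\hat h$ plus a conformal term $\frac{4}{n-2}v\,g_0$ with $v=0$ on $\Sigma$. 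Taking $\hat h$ compactly supported makes $A'(0)$ pure trace, so $\la A'(0),\ringA(0)\ra_\gamma=0$ and condition (i) follows from the known result for the first-order functional $\mathcal{E}^{(1)}_{\phi H_{g_0}}$; then, with (i) in hand, taking $\hat h$ with $\hat h|_{T(\Sigma)}=0$, $\hat h(\nu_{g_0},\cdot)|_{T(\Sigma)}=0$, $\hat h(\nu_{g_0},\nu_{g_0})=1$, cut off near a point where $\phi\neq0$, yields $\int_\Sigma\phi\,\eta\,|\ringA(0)|^2\,d\gamma_{g_0}=0$, hence condition (ii). Unless you actually prove the boundary-data solvability statement your duality step presupposes, your argument is incomplete; the fix is to replace that step by the explicit path construction just described.
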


We give some  remarks about Theorem \ref{t-main-2}.

\begin{remark} \label{rmk-mt08}
Besides \eqref{eq-mass-AF-2}, our motivation to study $\mathcal{E}_\phi(\cdot)$ on $\mathcal{M}^\lambda_\gamma$
also comes from a corresponding result on  a  weighted total mean curvature functional  in \cite{MiaoShiTam09}.
More precisely,  if we let
$$
\mathcal{E}^{(1)}_\phi(g)=\int_\Sigma \phi H(g) d \gamma_g ,
$$
where $H(g) $ is the usual mean curvature of $\Sigma$ in $(\Omega,g)$,   it was proved in \cite{MiaoShiTam09} that
  $g_0$ is a critical point of $\mathcal{E}^{(1)} _\phi (\cdot) $ in  $\mathcal{M}_\gamma^\lambda$
  if and only if $\phi$ is the boundary value of some element $N \in \text{Ker}(D\mathcal{S}^*_{g_0})$.
\end{remark}

\begin{remark}
If $(\Omega, g_0)$ has a conformal Killing vector field $X$ such that
$ X$ does not vanish along $ \Sigma$ and is  orthogonal to $\Sigma$,
 then every point in $\Sigma$ is umbilical (see Proposition \ref{p-2ndmean} in Section 4).
\end{remark}

This paper is organized as follows. In Section \ref{s-conformal}, we discuss properties
 of manifolds of constant scalar curvature admitting a conformal Killing vector field.
In Section \ref{s-F}, we prove Theorem \ref{t-main-1}. In Section \ref{s-meancurvature}, we prove Theorem \ref{t-main-2}.

\section{Manifolds  with   a conformal Killing vector field} \label{s-conformal}


Let $ \Omega$, $ \Sigma$ be given as in the introduction.
Let $g_0$ be a  Riemannian metric on $ \Omega$ satisfying

\begin{enumerate}
  \item [{\bf (a1):}] $g_0$ has constant scalar curvature $\lambda n(n-1)$, with $\lambda=0, 1$ or $-1$,
  where $n$ is the dimension of $\Omega$;
  \item  [{\bf (a2):}] $(\Omega, g_0)$ has a conformal Killing vector field $X$, i.e.
  $$  \eta_{i |  j}+\eta_{j | i}=\frac2n (\div_{g_0}X)(g_0)_{ij}  $$
  where $\eta$ is the 1-form dual to $X$ with respect to $g_0$ and ``  ${ }_| $  " denotes the covariant differentiation
  on $(\Omega,g_0)$.
\end{enumerate}


\begin{lma}  \label{lma-Phi}
Suppose $(\Omega, g_0)$ satisfies  conditions {\bf (a1)}, {\bf (a2)}. Then
\begin{enumerate}
\item[(i)]  $ \tr_{g_0}( D \mathcal{S}_{g_0}^* ( \div_{g_0} X ))= 0 $, where ``$ \tr_{g_0} (\cdot)$"
denotes the trace  with respect to $g_0$.

\vspace{.2cm}

\item [(ii)]
$  D \mathcal{S}_{g_0}^* ( \div_{g_0} X )=-\frac{n}{n-2}\lf[ L_XG^{g_0}_\lambda +\frac{n-2}{n} (\div_{g_0} X )G^{g_0}_\lambda \ri],
$
where ``$L_X$" denotes  the Lie derivative along  $X$.
\end{enumerate}
\end{lma}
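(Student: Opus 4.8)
The plan is to reduce both statements to a single scalar identity for $\psi := \div_{g_0} X$, namely
\[
\Delta_{g_0}\psi + \lambda n\,\psi = 0 ,
\]
and to exploit the fact that, because {\bf (a1)} forces $\mathcal{S}_{g_0}-\lambda(n-1)(n-2)=2\lambda(n-1)$, the tensor in \eqref{e-Gg} simplifies to $G^{g_0}_\lambda=\Ric(g_0)-\lambda(n-1)g_0$. Once the displayed identity is available, part (i) is immediate and part (ii) follows by a direct comparison of Lie derivatives with $D\mathcal{S}^*_{g_0}(\psi)$.

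To establish the identity I would use that scalar curvature is diffeomorphism-natural. Since $\mathcal{S}_{g_0}$ is constant, $L_X\mathcal{S}_{g_0}=X(\mathcal{S}_{g_0})=0$; on the other hand, naturality gives $L_X\mathcal{S}_{g_0}=D\mathcal{S}_{g_0}(L_X g_0)$, a purely local identity so that the non-completeness of the flow of $X$ near $\Sigma$ causes no difficulty. By {\bf (a2)}, $L_X g_0=\frac{2}{n}\psi\,g_0$ is a pure conformal direction, and inserting $h=\frac{2}{n}\psi\,g_0$ into the standard linearization
\[
D\mathcal{S}_{g_0}(h)=-\Delta_{g_0}(\tr_{g_0}h)+\div_{g_0}\div_{g_0}h-\langle h,\Ric(g_0)\rangle
\]
yields $-\frac{2(n-1)}{n}\Delta_{g_0}\psi-\frac{2\mathcal{S}_{g_0}}{n}\psi=0$, i.e. $(n-1)\Delta_{g_0}\psi+\mathcal{S}_{g_0}\psi=0$; since $\mathcal{S}_{g_0}=\lambda n(n-1)$ this is exactly the claimed identity. (Alternatively one can differentiate the conformal Killing equation in {\bf (a2)} twice and use the contracted Bianchi identity $\div_{g_0}\Ric(g_0)=\frac12 d\mathcal{S}_{g_0}=0$, but the bookkeeping of curvature commutators is heavier.)

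For (i), I would simply trace $D\mathcal{S}^*_{g_0}(\psi)=-(\Delta_{g_0}\psi)g_0+\nabla^2\psi-\psi\,\Ric(g_0)$: using $\tr_{g_0}g_0=n$, $\tr_{g_0}\nabla^2\psi=\Delta_{g_0}\psi$ and $\tr_{g_0}\Ric(g_0)=\mathcal{S}_{g_0}$ one gets $\tr_{g_0}D\mathcal{S}^*_{g_0}(\psi)=-(n-1)\Delta_{g_0}\psi-\mathcal{S}_{g_0}\psi=-(n-1)(\Delta_{g_0}\psi+\lambda n\psi)$, which vanishes by the identity. For (ii), I would compute $L_X G^{g_0}_\lambda=L_X\Ric(g_0)-\lambda(n-1)L_X g_0$; naturality again gives $L_X\Ric(g_0)=D\Ric_{g_0}(\frac{2}{n}\psi g_0)$, and the conformal linearization of Ricci, $D\Ric_{g_0}(2\sigma g_0)=-(n-2)\nabla^2\sigma-(\Delta_{g_0}\sigma)g_0$ with $\sigma=\psi/n$, produces $L_X\Ric(g_0)=-\frac{n-2}{n}\nabla^2\psi-\frac{1}{n}(\Delta_{g_0}\psi)g_0$. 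Substituting this, together with $L_X g_0=\frac{2}{n}\psi g_0$ and the simplified $G^{g_0}_\lambda=\Ric(g_0)-\lambda(n-1)g_0$, into $-\frac{n}{n-2}[L_X G^{g_0}_\lambda+\frac{n-2}{n}\psi G^{g_0}_\lambda]$, the $\nabla^2\psi$ and $-\psi\Ric(g_0)$ terms reproduce those of $D\mathcal{S}^*_{g_0}(\psi)$ verbatim, while the remaining coefficients of $(\Delta_{g_0}\psi)g_0$ and $\psi g_0$ collapse to $-(\Delta_{g_0}\psi)g_0$ precisely upon invoking $\Delta_{g_0}\psi=-\lambda n\psi$.

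The main obstacle is organizational rather than conceptual: it lies in the trace and commutator bookkeeping of the two conformal linearizations and, above all, in the careful collection of the various $\psi g_0$ and $(\Delta_{g_0}\psi)g_0$ terms in (ii), where the two contributions carrying the factor $\lambda(n-1)$ must combine to $-\lambda(n-1)\psi g_0$ and the identity $\Delta_{g_0}\psi=-\lambda n\psi$ must then be used to absorb the leftover Laplacian term. Verifying the conformal linearization formulas for $\Ric(g_0)$ and $\mathcal{S}_{g_0}$ in the paper's sign conventions is the only other point requiring care.
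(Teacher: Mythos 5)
Your proof is correct, and its computational engine is the same as the paper's: both arguments rest on the fact that the conformal Killing condition makes $L_X g_0$ a pure-trace direction, so that $L_X\Ric(g_0)$ can be computed from the conformal variation of the Ricci tensor --- your naturality identity $L_X\Ric(g_0)=D\Ric_{g_0}(L_Xg_0)$ yields exactly the paper's formula \eqref{e-Lie-Ric}, which the paper obtains instead via Herzlich's flow argument $\phi_t^*g_0=e^{2u_t}g_0$. The genuine difference is in how the trace information is handled, and it is worth recording. The paper never isolates the scalar equation $\Delta_{g_0}\psi+\lambda n\psi=0$ in advance: it assembles the tensor identity \eqref{eq-Lie-end} with the then-unknown term $\frac1n \tr_{g_0}\big(D\mathcal{S}^*_{g_0}(\div_{g_0}X)\big)\, g_0$ on the right-hand side, and eliminates it by tracing the whole identity, using $\tr_{g_0}(L_X G^{g_0}_\lambda)=0$ and $\la G^{g_0}_\lambda, g_0\ra=0$, which come from the algebraic Lie-derivative identity \eqref{e-Lie-Co} applied with $h=g_0$; thus (i) and (ii) emerge simultaneously from one identity. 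You instead derive the scalar identity up front from $0=L_X\mathcal{S}_{g_0}=D\mathcal{S}_{g_0}(L_Xg_0)$ (diffeomorphism naturality of scalar curvature applied to $\frac2n\psi g_0$), which gives (i) in one line and then lets you verify (ii) by direct substitution. What your route buys: (i) becomes an independent, elementary consequence of naturality rather than a byproduct of tracing, and you bypass the identity \eqref{e-Lie-Co} entirely (the paper, however, reuses that identity in Section 3, so proving it here costs the paper nothing). Your bookkeeping in (ii) checks out: the two terms carrying $\lambda(n-1)$ combine to $-\lambda(n-1)\psi g_0$, and after multiplying by $-\frac{n}{n-2}$ the discrepancy with $D\mathcal{S}^*_{g_0}(\psi)$ is exactly $\frac{n-1}{n-2}(\Delta_{g_0}\psi+\lambda n\psi)\,g_0$, which vanishes by your scalar identity.
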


\begin{proof} For simplicity, we let  $f =\div_{g_0}X$ and denote  $G^{g_0}_\lambda $ by $G$.
For  any  smooth symmetric $(0,2)$ tensor $h$, computing in a local orthonormal frame $ \{ e_i \}_{ 1 \le i \le n }$,
we have
\bee
 \begin{split}
 & \ \la L_XG,h\ra-\la G,L_Xh\ra \\
 = & \ \la \nabla_XG ,h\ra-\la G ,\nabla_Xh\ra +\lf( G_{kj}\eta_{k|i}+G_{ik}\eta_{k|j}\ri)h_{ij}-\lf( h_{kj}\eta_{k|i}+h_{ik}\eta_{k|j}\ri)G_{ij}\\
 =& \ 2 \la  \nabla_XG,h\ra-X\la G,h\ra+h_{ij}\lf(G_{kj}\eta_{k|i}+G_{ik}\eta_{k|j}\ri)
 - h_{ij}\eta_{i|k}G_{kj}- h_{ij}\eta_{j|k} G_{ik}\\
 =& \ 2\la  \nabla_XG,h\ra-X\la G,h\ra+  h_{ij} \lf[  2 \lf(G_{kj}\eta_{k|i}+G_{ik}\eta_{k|j}\ri)
 -\frac4n f G_{ij}  \ri] \\
 =&\ 2\la L_XG, h\ra  -\frac4nf\la  G,h\ra   -X\la G,h \ra ,
 \end{split}
 \eee
 where
 summation is taken  over any pair of repeated indices and {\bf (a2)}  is applied.
  Therefore,
  \be \label{e-Lie-Co}
 \begin{split}
 \la L_XG,h\ra=-\la G,L_Xh\ra+\frac4nf\la  G,h\ra+X\la G,h \ra.
 \end{split}
 \ee
Now let  $h=g_0$. By \eqref{e-Lie-Co}, {\bf (a2)} and the fact $ \mathcal{S}_{g_0} = \l n (n-1)$,
\be\label{e-Lie-Co-1}
 \la G, g_0 \ra  = 0    \ \ \mathrm{and} \ \ \tr_{g_0}\lf(L_X G \ri) = 0.
\ee

To proceed, we adopt an argument of Herzlich  \cite{Herzlich}.
Let $ \{ \phi_t \}_{ | t | < \epsilon} $ be the $1$-parameter family of local diffeomorphisms  generated by  $X$.
Let $g_t=\phi_t^*(g_0)$,
then $  g_t  =e^{  2u_t } g_0$
for some smooth function $ u(\cdot, t) = u_t (\cdot)$  with $u_0=0$. Let $v=\frac{\p u }{\p t}|_{t=0}$.
By {\bf (a2)} and the fact
$ L_X g_0 = \frac{d  }{dt} g_t |_{t=0} $,
\be \label{e-divx-v}
f   = n v.
\ee
On the other hand,
\be
\begin{split}
\Ric (g_t) = & \ \Ric(g_0)-(n-2)(\nabla^2_{g_0}u_t-du_t\otimes du_t) \\
& \ -\lf(\Delta_{g_0}u_t+(n-2)|du_t|^2_{g_0}\ri) g_0 .
\end{split}
\ee
Taking the $t$-derivative  and letting  $t=0$, we have
\be \label{e-Lie-Ric}
\begin{split}
L_X \Ric(g_0)  =-(n-2)\nabla^2_{g_0}  v-(\Delta_{g_0}  v)g_0.
\end{split}
\ee
Hence, by \eqref{e-divx-v} and  \eqref{e-Lie-Ric},
\be \label{eq-Lie-end}
\begin{split}
& \ L_XG+\frac{n-2}{n} f G \\
= & \  - (n-2) D \mathcal{S}_{g_0}^* ( v) - (n-1) \lf(  \Delta_{g_0} v + \l n v \ri) g_0 \\
= & \  - \frac{n-2}n D \mathcal{S}_{g_0}^* (f ) + \frac1n\tr_{g_0} \lf( D \mathcal{S}_{g_0}^*(f) \ri) g_0.
\end{split}
\ee
Taking trace of \eqref{eq-Lie-end} and using \eqref{e-Lie-Co-1}, we conclude that (i) and (ii) of the lemma hold.
\end{proof}

If  $ g_0$ is Einstein,  then $ G^{g_0}_\lambda = 0 $,  which
implies $  D \mathcal{S}_{g_0}^* (  \div_{g_0} X )= 0  $ by Lemma \ref{lma-Phi} (ii).
(This was   the content of   \cite[Lemma 2.2]{Herzlich}.)
Next we show that the reverse is true under certain boundary and interior conditions.

\begin{prop}\label{p-Einstein}
Let  $(\Omega, g_0)$ satisfying  conditions  {\bf (a1),  (a2)}. Suppose
   \begin{enumerate}
     \item [(i)] $  D \mathcal{S}_{g_0}^* (  \div_{g_0} X ) = 0 $,
     \item [(ii)] $\la X,\nu_{g_0}\ra |G^{g_0}|^2\ge 0$ along $\Sigma$, and
     \item [(iii)] there exists  a smooth function $u$ on $ \Omega$
      such that
      $ \div_{g_0}X > X(u) $
      on a dense subset of $\Omega$.
   \end{enumerate}
     Then $g_0$ is Einstein in $\Omega$.
\end{prop}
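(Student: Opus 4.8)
The plan is to prove that the tensor $G := G^{g_0}_\lambda$ vanishes identically on $\Omega$; since $\mathcal{S}_{g_0} = \lambda n(n-1)$ is constant, $G \equiv 0$ is equivalent to $\Ric(g_0) = \lambda(n-1) g_0$, i.e. to $g_0$ being Einstein. The starting point is Lemma \ref{lma-Phi}(ii): together with hypothesis (i), namely $D\mathcal{S}_{g_0}^*(\div_{g_0} X) = 0$, it yields the pointwise tensor identity
\[ L_X G = -\frac{n-2}{n}\, f\, G, \qquad f := \div_{g_0} X . \]
Thus the Lie derivative of $G$ along $X$ is merely a scalar multiple of $G$, which reduces the geometric problem to a first-order transport problem for the scalar $|G|^2$.

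Next I would convert this tensorial identity into a scalar one. Pairing the commutator identity \eqref{e-Lie-Co} (established in the proof of Lemma \ref{lma-Phi}) with $h = G$ gives $2\la L_X G, G\ra = \frac{4}{n} f |G|^2 + X(|G|^2)$. Substituting $L_X G = -\frac{n-2}{n} f G$ and using the cancellation $\frac{2(n-2)}{n} + \frac{4}{n} = 2$, I obtain the clean transport equation
\[ X\!\left(|G|^2\right) = -2\, f\, |G|^2 = -2\,(\div_{g_0} X)\, |G|^2 \quad \text{on } \Omega . \]
This is the analytic core: $w := |G|^2 \ge 0$ satisfies a linear first-order equation along the flow lines of $X$, with coefficient controlled by $\div_{g_0} X$.

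To extract $w \equiv 0$ I would integrate a weighted form of this equation against the divergence theorem. Using hypothesis (iii), fix $u$ with $f > X(u)$ on a dense subset, and set $\tilde w := e^u w \ge 0$. From the transport equation, $X(\tilde w) = \tilde w\,(X(u) - 2f)$, hence $\div_{g_0}(\tilde w X) = \tilde w\, f + X(\tilde w) = \tilde w\,(X(u) - f)$. Integrating over $\Omega$,
\[ \int_\Sigma \tilde w\, \la X, \nu_{g_0}\ra\, d\gamma_{g_0} = \int_\Omega \tilde w\,(X(u) - f)\, dV_{g_0}. \]
By hypothesis (ii) the boundary integrand equals $e^u\, \la X, \nu_{g_0}\ra\, |G|^2 \ge 0$, so the left side is $\ge 0$; by (iii) and continuity $X(u) - f \le 0$ on all of $\Omega$, so the right side is $\le 0$. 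Therefore both sides vanish and the sign-definite integrand on the right is identically zero. Since $X(u) - f < 0$ on a dense set, $\tilde w = 0$ there, whence $w = |G|^2 \equiv 0$ on $\Omega$ by continuity, and $g_0$ is Einstein. (Only the pointwise identities and Stokes' theorem are used here, so no global flow of $X$ is needed.)

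The step I expect to be the main obstacle is the middle one: producing the scalar transport equation with exactly the coefficient $-2f$. This precise constant --- the outcome of the cancellation above --- is what forces the weight to be $e^u$ (rather than some other power $e^{\alpha u}$) and makes the boundary term from (ii) and the interior term from (iii) carry matching signs. With a different coefficient the two integral contributions would not be forced to vanish simultaneously, and the density/positivity argument would break down.
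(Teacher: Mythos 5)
Your proof is correct and follows essentially the same route as the paper's: both reduce hypothesis (i) via Lemma \ref{lma-Phi}(ii) to $L_X G = -\frac{n-2}{n} f G$, derive the same transport equation $X(|G|^2) = -2 f |G|^2$ (you via identity \eqref{e-Lie-Co} with $h=G$, the paper by directly expanding $\la \nabla_X G, G\ra$ with the conformal Killing relation), and then run the identical weighted divergence-theorem argument with weight $e^u$, using (ii) for the boundary sign and (iii) with density for the interior sign. The only differences are cosmetic reorganizations of the same computation.
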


\begin{proof}  Denote $G^{g_0}_\lambda $ by $G$.  By (i) and  Lemma \ref{lma-Phi},
$$  L_XG+\frac{n-2}{n} (\div_{g_0} X )G = 0  . $$
Therefore,
\be \label{eq-pde-G}
\begin{split}
\frac12X(|G|^2)= & \ \la \nabla_XG,G\ra\\
=& \ \la L_XG,G\ra-G_{kj}\eta_{k|i}G_{ij}-G_{ki}\eta_{k|j}G_{ij}\\
=& \ -\frac{n-2}n ( \div_{g_0} X)|G|^2-\frac2n ( \div_{g_0} X)|G|^2\\
= & \ - ( \div_{g_0} X)|G|^2.
\end{split}
\ee
 Multiplying \eqref{eq-pde-G} by a function $\phi$ and integrating on $ \Omega$, we have
\bee
\begin{split}
& \ - 2\int_\Omega  \phi (\div_{g_0}X)|G|^2dV_{g_0} \\
=& \ \int_\Omega   \lf[ X( \phi |G|^2) -X(\phi)|G|^2 \ri] dV_{g_0}\\
=& \ \int_\Omega   \lf[ \div_{g_0} ( \phi |G|^2X)- \phi \div_{g_0}(X)|G|^2 -X(\phi )|G|^2\ri] dV_{g_0} ,
\end{split}
\eee
where $ d V_{g_0}$ denotes the volume element of $g_0$.
Hence,
$$
\int_\Omega   \lf[   X(\phi )  - \phi  \div_{g_0}(X)\ri]  |G|^2 dV_{g_0}=\int_\Sigma \phi |G|^2 \la X,\nu_{g_0}\ra d\gamma_{g_0}  .
$$
Letting  $ \phi =e^{u}$, by (ii) we have
$$
\int_\Omega  e^{u} \lf[  X(u)-\div_{g_0}(X)   \ri] |G|^2dV_{g_0}\ge 0.
$$
By (iii),  we conclude that $G\equiv0$ and $g_0$ is Einstein.
\end{proof}

\begin{remark}
Proposition \ref{p-Einstein} implies that,  if $  \div_{g_0} X \in \mathrm{Ker} ( D \mathcal{S}_{g_0}^*) $
with $ \la X, \nu_{g_0} \ra \ge 0 $ along $ \Sigma$ and $ \div_{g_0} X > 0 $ in $\Omega$,
then $ g_0$ is Einstein.
\end{remark}

\section{Critical points of $\mathcal{F}_X( \cdot )$}\label{s-F}

Let  $g_0$ be a metric on $ \Omega$ satisfying conditions {\bf (a1),  (a2)}.
In this section, we consider the functional $\mathcal{F}_X (\cdot)$
near  $g_0$.

\begin{lma}\label{lma-1stvar-1}
   Let $\{ g(t) \}_{| t | < \epsilon } $ be a smooth family of metrics on $\Omega$
    with $g(0)=g_0$ and $g'(0)=h$.
    \begin{enumerate}
      \item [(i)] The derivative of $\mathcal{F}_X (g(t))$ at $t = 0 $ is given by
   \bee
   \begin{split}
& \     \frac{d}{dt} \mathcal{F}_X (g(t))|_{t=0} \\
= & \ \frac{n-2}{2n}\int_\Omega \lf[ \la D\mathcal{S}^*_{g_0}(\div_{g_0}X),h\ra_{g_0}  -   ( \div_{g_0} X )D \mathcal{S}_{g_0}(h)
     \ri] dV_{g_0}\\
&+\frac12\int_{\Sigma}\la G^{g_0}_\lambda, h\ra_{g_0} \la X,\nu_{g_0} \ra_{g_0}d\gamma_{g_0},
 \end{split}
\eee
 where $ D\mathcal{S}_{g_0}(\cdot) $  is the linearization of the scalar curvature  at $g_0$.

\vspace{.2cm}

 \item [(ii)]
If  $h|_{T(\Sigma)}=0$,
then
  \bee
  \begin{split}
 & \    \frac{d}{dt} \mathcal{F}_X (g(t))|_{t=0} \\
=  & \    \int_\Sigma   \lf[   \frac{n-2}{n} ( \div_{g_0} X )  D H_{g_0} (h)
 + \frac12 \la G^{g_0}_\lambda ,h\ra_{g_0} \la X,\nu_{g_0} \ra_{g_0} \ri] d\gamma_{g_0}  ,
 \end{split}
\eee
where $ D H_{g_0}(\cdot) $ is the linearization of the mean curvature $H$ of $\Sigma$  at $g_0$.

%
    \end{enumerate}
   \end{lma}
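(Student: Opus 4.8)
The plan is to trade the boundary integral defining $\mathcal{F}_X$ for an interior integral \emph{before} differentiating, since an interior term is forced to appear in the answer of (i). Because $G^g_\lambda=\Ric(g)-\frac12[\mathcal{S}_g-\lambda(n-1)(n-2)]g$ differs from the Einstein tensor only by a parallel multiple of $g$, the contracted second Bianchi identity gives $\div_g G^g_\lambda=0$ for \emph{every} metric $g$. Contracting $G^g_\lambda$ with $X$ in one slot produces a $1$-form whose $g$-dual vector field $Y_g$ satisfies $\div_g Y_g=\la G^g_\lambda,\nabla X\ra_g=\frac12\la G^g_\lambda,L_Xg\ra_g$, using $\div_g G^g_\lambda=0$ and the symmetry of $G^g_\lambda$. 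Hence, by the divergence theorem (with $\nu_g$ the outward normal),
\[
\mathcal{F}_X(g)=\frac12\int_\Omega \la G^g_\lambda,L_Xg\ra_g\,dV_g ,
\]
an identity valid for all $g$.

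Next I would differentiate this interior integral at $t=0$, writing the integrand as $\frac12 g^{ik}g^{jl}(G^g_\lambda)_{ij}(L_Xg)_{kl}\,dV_g$ and varying each factor by the standard first-variation formulas. Two facts from the hypotheses are used repeatedly: by {\bf (a2)}, $L_Xg_0=\frac2n f g_0$ with $f=\div_{g_0}X$, and by \eqref{e-Lie-Co-1}, $\tr_{g_0}G=0$ (writing $G=G^{g_0}_\lambda$). Consequently the variation of $dV_g$ drops out; the two inverse-metric variations together give $-\frac2n f\la G,h\ra$; the variation of $G$ gives $\frac1n f\,\tr_{g_0}\dot G$, which I would simplify using $\tr_{g_0}\big(D\Ric_{g_0}(h)\big)=D\mathcal{S}_{g_0}(h)+\la\Ric(g_0),h\ra$ and the definition of $G$ to obtain $\tr_{g_0}\dot G=-\frac{n-2}{2}D\mathcal{S}_{g_0}(h)+\la G,h\ra$; and, since $L_Xg$ is linear in $g$, its variation is $L_Xh$, contributing $\frac12\la G,L_Xh\ra$.

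The term $\frac12\int_\Omega\la G,L_Xh\ra\,dV_{g_0}$ is where the boundary integral is produced. Applying the Lie-derivative identity \eqref{e-Lie-Co} with the two tensors taken to be $G$ and $h$, together with $\int_\Omega X\la G,h\ra\,dV_{g_0}=\int_\Sigma\la G,h\ra\la X,\nu_{g_0}\ra\,d\gamma_{g_0}-\int_\Omega f\la G,h\ra\,dV_{g_0}$, rewrites it as a multiple of $\int_\Omega f\la G,h\ra$, the boundary term $\frac12\int_\Sigma\la G,h\ra\la X,\nu_{g_0}\ra\,d\gamma_{g_0}$, and $-\frac12\int_\Omega\la L_XG,h\ra$. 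Substituting Lemma \ref{lma-Phi}(ii), $L_XG=-\frac{n-2}{n}fG-\frac{n-2}{n}D\mathcal{S}^*_{g_0}(f)$, converts the last integral into the desired $D\mathcal{S}^*_{g_0}(f)$ term plus a further $f\la G,h\ra$ contribution. Collecting everything, all the $\int_\Omega f\la G,h\ra$ pieces cancel, leaving precisely the formula in (i); the main bookkeeping hazard is keeping track of these several $\frac1n f\la G,h\ra$-type contributions so that they indeed cancel.

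For (ii) I would start from (i) and convert its interior integral to a boundary integral. Integrating $\int_\Omega\big[f D\mathcal{S}_{g_0}(h)-\la D\mathcal{S}^*_{g_0}(f),h\ra\big]dV_{g_0}$ by parts twice produces a boundary integral in $f$, $h(\nu_{g_0},\nu_{g_0})$, $(\div_{g_0}h)(\nu_{g_0})$ and normal derivatives of $\tr_{g_0}h$. Imposing $h|_{T(\Sigma)}=0$ collapses $\tr_{g_0}h$ to $h(\nu_{g_0},\nu_{g_0})$ on $\Sigma$, and a tangential integration by parts on the closed manifold $\Sigma$ moves the remaining derivatives off $f$; the surviving bracket equals $-2\,DH_{g_0}(h)$ by the first-variation formula for the mean curvature under a deformation that vanishes on $T(\Sigma)$. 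Tracking the constant $-\frac{n-2}{2n}\cdot(-2)=\frac{n-2}{n}$ then yields (ii). I expect the delicate point to be exactly this identification of the boundary bracket with $DH_{g_0}(h)$, which requires the linearized-mean-curvature formula for a metric variation that is tangentially trivial but may carry nonzero normal and mixed components.
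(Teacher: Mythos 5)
Your proposal is correct and follows essentially the same route as the paper: both rewrite $\mathcal{F}_X$ as the interior integral $\frac12\int_\Omega \la G^g_\lambda, L_X g\ra_g\, dV_g$ via the contracted Bianchi identity, differentiate at $t=0$ using {\bf (a1)}, {\bf (a2)} and the tracelessness of $G^{g_0}_\lambda$, and then invoke \eqref{e-Lie-Co} together with Lemma \ref{lma-Phi}(ii) and the divergence theorem to produce the $D\mathcal{S}^*_{g_0}(\div_{g_0}X)$ term and the boundary integral, with the $f\la G,h\ra$ contributions cancelling exactly as you anticipate. The only cosmetic difference is in (ii), where the paper simply cites the integration-by-parts identity relating $D\mathcal{S}_{g_0}$, $D\mathcal{S}^*_{g_0}$ and $DH_{g_0}$ for variations with $h|_{T(\Sigma)}=0$ (from \cite{MiaoTam08, MiaoShiTam09}) rather than re-deriving it as you propose.
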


 \begin{proof}
 Let $\eta(t)$ be the $1$-form dual to $X$ with respect to $g(t)$.
 Let `` $ ; $ " denote  the covariant derivative  on $(\Omega, g(t))$.
 Let $ d V_{g(t)}$ be the volume element of $g(t)$.  Denote $G^{g(t)}_\lambda $ by $G^t$.
By the contracted Bianchi identity,  $\div_{g(t)}G^t=0$. Hence, $ \mathcal{F}_X (g(t))$ can
be rewritten as
\be\label{e-1stvar-1}
\mathcal{F}_X (g(t))
=  \int_{\Omega}  \la G^{t},\beta^t \ra_{g(t)} dV_{g(t)}  ,
\ee
where $\beta^t_{ij}=\frac12 (\eta_{i;j}+\eta_{j;i})$.

At $t=0$,  {\bf (a1)} and {\bf (a2)} imply
\be \label{e-1stvar-2}
 \la G^0,\beta^0 \ra_{g_0} = 0
 \ee  and
\be  \label{e-1stvar-3}
 \frac{d}{dt} \la G^{ t},\beta^t \ra_{g(t)}
=  \frac1n  f \lf[ -2 \la G^0, h\ra_{g_0}+  \tr_{g_0}(\frac{d}{dt}G^t)\ri]
 +\la G^0,\frac{d}{dt}\beta^t \ra_{g_0} .
\ee
where $ f = \div_{g_0} X$.
Using  the fact
$$ \tr_{g(t)}  G^t  = \frac{n-2}{2} \lf[ - \mathcal{S}_{g(t)} + \l (n-1)n \ri]
\  \mathrm{and} \  \beta^t = \frac12 L_X g(t),  $$
one has
\be\label{e-1stvar-4}
\tr_{g_0}(\frac{d}{dt}G^t) |_{t=0} = - \frac{n-2}{2} D \mathcal{S}_{g_0} (h) + \la  G^0,h\ra_{g_0}
\ee
and
\be \label{e-1stvar-5}
\begin{split}
\la G^0,\frac{d}{dt}\beta^t \ra_{g_0}|_{t=0} = & \ \frac12\la G^0,L_X h  \ra_{g_0}  .
\end{split}
\ee
Therefore,  by \eqref{e-1stvar-1} -- \eqref{e-1stvar-5} and  \eqref{e-Lie-Co},
  we have
\be  \label{e-1stvar-6}
\begin{split}
& \   \frac{d}{dt}\mathcal{F}_X (g(t))|_{t=0} \\
 = \ & \ \int_\Omega \lf[ - \frac{n-2}{2n} f   D \mathcal{S}_{g_0}(h)
  +  \frac12 \la G^0,  L_X h -\frac2n f   h \ra_{g_0}\ri] dV_{g_0}\\
 =& \ \int_\Omega  \lf\{ - \frac{n-2}{2n} f   D \mathcal{S}_{g_0}(h)
 +   \frac12 \la -L_XG^0 + \frac{2-n}{n} f   G^0,h\ra_{g_0} \ri. \\
 & \ \lf. +  \frac12 \div_{g_0} \lf[  \la G^0, h \ra_{g_0 } X \ri] \ri\}
\end{split}
\ee
(i) now follows from \eqref{e-1stvar-6} and  Lemma \ref{lma-Phi}(ii).
When $h|_{T(\Sigma)}=0$,
one easily checks that the following formula
\be
 \int_\Omega w D \mathcal{S}_{g_0} (h) d V_{g_0}
=  \int_\Omega \la D \mathcal{S}_{g_0}^* (w) , h \ra_{g_0} d V_{g_0}
- 2 \int_\Sigma w D H_{g_0} (h)  d \gamma_{g_0}
\ee
holds for any function $w$ on $ \Omega$ (cf. (34) in \cite{MiaoTam08} and (2.3) in \cite{MiaoShiTam09}).
Hence (ii) is true.
 \end{proof}


\begin{proof}[Proof of Theorem \ref{t-main-1}]

(i) For a variation $ \{ g(t) \} \subset \mathcal{M}^\lambda_0$, one has  $h = 0 $ at $ \Sigma$ and
$ D \mathcal{S}_{g_0} (h) = 0  $ on $ \Omega$. Hence, by  Lemma \ref{lma-1stvar-1}(i),
  \be \label{e-pf-df-1}
     \frac{d}{dt} \mathcal{F}_X (g(t))|_{t=0} \\
=  \frac{n-2}{2n}\int_\Omega  \la D\mathcal{S}^*_{g_0}(\div_{g_0}X),h\ra_{g_0}   dV_{g_0}  .
\ee
Therefore, if $ \div_{g_0}X \in \text{Ker}(D\mathcal{S}^*_{g_0}) $, then
$ \frac{d}{dt} \mathcal{F}_X (g(t))|_{t=0} = 0 $.

To see the converse is true, let $\hat h$ be an arbitrary symmetric $(0, 2)$ tensor
on $ \Omega$.
Since the first Dirichlet eigenvalue of $ (n-1) \Delta_{g_0} + \mathcal{S}_{g_0}$ is assumed to be positive,
by  the proof of \cite[Proposition 1]{MiaoTam08},  one can find a positive function $u(x,t) = u_t (x)$,  with  $ x \in \Omega$
   and $| t| < \delta  $   for some small $\delta$,
   such  that  $u_0  = 1$ on $ \Omega$,
    $g(t)  : =u_t^{\frac 4{n-2}}(g_0+t \hat h)$  has the same constant scalar curvature as $g_0$
      and    $u_t=1$ at $\Sigma$ for all $t$.
   Such a variation $\{ g(t) \}$ satisfies
   \be \label{e-g-h}
    h: = g'(0)=\frac4{n-2} \frac{\p u}{\p t}|_{t=0}  g_0 +\hat h
    \ee
with   $ h = \hat h $ at $ \Sigma$.
To proceed, we can choose $ \hat h $ to have compact support in $ \Omega$ for instance.
Then, by \eqref{e-pf-df-1}, \eqref{e-g-h} and Lemma \ref{lma-Phi} (i),
  \bee \label{e-pf-df-2}
   0
=  \frac{n-2}{2n}\int_\Omega  \la D\mathcal{S}^*_{g_0}(\div_{g_0}X), \hat h\ra_{g_0}   dV_{g_0}  ,
\eee
which readily shows  $ D\mathcal{S}^*_{g_0}(\div_{g_0}X) = 0 $.

(ii)  Denote $G^{g_0}_\lambda$ by $G^0$ for simplicity.
If  $\div_{g_0}X\in \text{Ker}(D\mathcal{S}^*_{g_0} (\cdot) )$
   and $\nu_{g_0}$ is an eigenvector of  $\la X,\nu_{g_0}\ra G^{0}$ with zero eigenvalue everywhere at $\Sigma$, then
     $ \frac{d}{dt}\mathcal{F}_X (g(t))|_{t=0}=0$ by  Lemma \ref{lma-1stvar-1}(i) and the fact $ h |_{T (\Sigma)} = 0$.

     Conversely, suppose
     $ \frac{d}{dt}\mathcal{F}_X (g(t))|_{t=0}=0 $
     for all $  \{ g(t) \} \subset  \mathcal{M}_\gamma^\lambda$,
     then $\div_{g_0}X \in \text{Ker}(D\mathcal{S}^*_{g_0}) $ by (i).
      Now let $\hat h$ be any symmetric $(0,2)$ tensor with $ \hat h |_{T (\Sigma) } = 0 $.
      By  the same construction of $\{ g(t) \}$ used in (i),  we have
      \be \label{eq-pf-df-3}
 0  = \frac12\int_{\Sigma}\la G^{0}, \hat h\ra_{g_0} \la X,\nu_{g_0} \ra_{g_0}d\gamma_{g_0}.
\ee
     Let $Q=\la X,\nu_{g_0}\ra G^{0}$ and choose $\hat h$ such that
     $$\hat h(u,v)= Q(u,v)-Q(u^T,v^T)  \ \ \mathrm{at} \ \Sigma ,$$
     where $ u$, $v$ are tangent vectors to $ \Omega$
     at $ \Sigma$ and $ u^T$, $ v^T$ denote the project of $u$, $v$ to the tangent space to $ \Sigma$, respectively.
     With this  choice of $ \hat h$,
      \eqref{eq-pf-df-3}  implies
     $ Q (\nu_{g_0} , u) = 0 $
for any $ u$ tangent to $ \Omega$ at $\Sigma$. Therefore, $ \nu_{g_0}$ is an eigenvector of $Q$
with zero eigenvalue.

   (iii) If  $g_0$ is Einstein, then $G^0 =0$ and
   $\div_{g_0}X\in \text{Ker}(D\mathcal{S}^*_{g_0}  )$.  Hence $
   \frac{d}{dt}\mathcal{F}_X (g(t))|_{t=0}=0
   $  for all variation  $\{ g(t) \} \subset \mathcal{M}^\lambda$ by Lemma \ref{lma-1stvar-1}(i). Next, suppose
    $   \frac{d}{dt}\mathcal{F}_X (g(t))|_{t=0}=0 $  for all such  $\{ g(t) \} $.
   Then  $\div_{g_0}X\in \text{Ker}(D\mathcal{S}^*_{g_0} )$ by (i) and
   the same construction of $\{ g(t) \}$ used in (i) shows that
    \eqref{eq-pf-df-3} holds  for any symmetric $(0,2)$ tensor $\hat h$ on $ \Omega$.
Let $ \psi$ be any extension of the function
$ \la X,\nu_{g_0}\ra_{g_0}  $ to $ \Omega$.
Let $ \hat h = \psi G^0$,
it then follows from \eqref{eq-pf-df-3} that
 \bee
|G^{g_0}|  \la X,\nu_{g_0}\ra=0.
\eee
By Proposition \ref{p-Einstein}, we conclude that $g_0$ is Einstein.
\end{proof}

\section{Critical points of $\mathcal{E}_\phi (\cdot)$}  \label{s-meancurvature}

Let $g_0$ be a metric on $ \Omega$ satisfying  {\bf (a1)}.
Let $H_{g_0}$ be the mean curvature of $ \Sigma $ in $(\Omega, g_0)$.
In this section, we discuss the behavior of $\mathcal{E}_\phi (\cdot)$ near $g_0$
in $\mathcal{M}_\gamma^\lambda$.

 \begin{lma}\label{l-2ndmean-1}
Let $ \{ g(t) \}_{ | t| < \epsilon } $ be a  smooth path of metrics with $ g(0) = g_0$ and $ g(t) |_{T(\Sigma) } = \gamma$
for all $t$.
Let $ H(t)$, $ A(t)$ denote  the mean curvature, the second fundamental form of $ \Sigma$ in $(\Omega, g(t))$,
respectively.
Let $\ringA   (t) $ be   the traceless part of $A(t)$. Then
\bee
\frac{d}{dt}\mathcal{E}_\phi(g(t))|_{t=0}=\int_\Sigma \phi \lf[  \frac{n-2 }{n-1}H(0)H'(0)-\la A'(0),
{\ringA (0)}   \ra_{\gamma}  \ri] d \gamma_{g_0} .
\eee
\end{lma}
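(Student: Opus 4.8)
The plan is to exploit the defining feature of the admissible paths: since $g(t)|_{T(\Sigma)} = \gamma$ is held fixed, the induced volume element $d\gamma_{g(t)} = d\gamma_{g_0}$ does not depend on $t$, and likewise the norm and inner product $\la\,\cdot\,,\,\cdot\,\ra_\gamma$ on tensors over $\Sigma$ are $t$-independent. Consequently all the $t$-dependence in $\mathcal{E}_\phi(g(t)) = \int_\Sigma \phi\, H_2(g(t))\, d\gamma_{g_0}$ sits in $H_2$, and the first step is simply
$$ \frac{d}{dt}\mathcal{E}_\phi(g(t))|_{t=0} = \int_\Sigma \phi\, \frac{d}{dt} H_2(t)|_{t=0}\, d\gamma_{g_0}, $$
reducing the lemma to a pointwise computation of the variation of the second order mean curvature.

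Next I would pass to the trace-free decomposition of the second fundamental form. Since $\tr_\gamma A = H$ and $\dim\Sigma = n-1$, writing $\ringA = A - \frac{H}{n-1}\gamma$ and using $\la\ringA,\gamma\ra_\gamma = \tr_\gamma\ringA = 0$ together with $\la\gamma,\gamma\ra_\gamma = n-1$ gives $|A|^2 = |\ringA|^2 + \frac{H^2}{n-1}$. Substituting this into \eqref{e-2ndmean-def} yields
$$ H_2 = \frac12\lf(H^2 - |A|^2\ri) = \frac{n-2}{2(n-1)} H^2 - \frac12 |\ringA|^2, $$
whose first term differentiates immediately to $\frac{n-2}{n-1}H(0)H'(0)$.

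The heart of the computation is differentiating $|\ringA(t)|^2$, and this is the step that requires the most care. The crucial observation is that the norm is taken with respect to the \emph{fixed} metric $\gamma$, so it contributes no $t$-derivative of its own and only the tensor $\ringA(t)$ varies; hence $\frac{d}{dt}|\ringA(t)|^2|_{t=0} = 2\la \ringA'(0), \ringA(0)\ra_\gamma$. I would then insert $\ringA'(0) = A'(0) - \frac{H'(0)}{n-1}\gamma$ and note that the $H'(0)$ term is annihilated because $\la\gamma,\ringA(0)\ra_\gamma = \tr_\gamma\ringA(0) = 0$; this cancellation is exactly what replaces $\ringA'(0)$ by $A'(0)$, leaving $\la A'(0),\ringA(0)\ra_\gamma$.

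Combining the two pieces gives $\frac{d}{dt} H_2|_{t=0} = \frac{n-2}{n-1}H(0)H'(0) - \la A'(0),\ringA(0)\ra_\gamma$, and integrating against $\phi\, d\gamma_{g_0}$ produces the stated formula. I do not expect any serious obstacle here; the only genuine subtlety to flag is bookkeeping, namely that $A(t)$ is the second fundamental form relative to the varying normal $\nu_{g(t)}$, so $A'(0)$ is the derivative of this one-parameter family of symmetric $2$-tensors on $\Sigma$, and once the induced metric is recognized as fixed, no extra term from differentiating the metric in the contraction appears — which is precisely why the final expression is this clean.
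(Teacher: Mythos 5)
Your proposal is correct and follows essentially the same route as the paper: both arguments exploit that the fixed induced metric $\gamma$ freezes the volume element and the inner product, and both reduce to the trace--traceless decomposition of $A$ together with $\tr_\gamma \ringA(0)=0$; the paper merely differentiates $\tfrac12(H^2-|A|^2)$ first and decomposes $\la A'(0),A(0)\ra_\gamma$ afterwards, whereas you decompose $H_2=\tfrac{n-2}{2(n-1)}H^2-\tfrac12|\ringA|^2$ before differentiating. The two computations are identical up to this reordering, and your bookkeeping (in particular that $\ringA'(0)=A'(0)-\tfrac{H'(0)}{n-1}\gamma$ uses the $t$-independence of $\gamma$) is sound.
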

\begin{proof}
Since $2H_2=H^2-|A|^2$,
\bee
\begin{split}
\frac{d}{dt}\mathcal{E}_\phi(g(t))|_{t=0}=& \ \int_\Sigma \phi \lf[ H(0)H'(0)-\la A'(0), A   \ra_\gamma  \ri] \dggz \\
=& \ \int_\Sigma \phi \lf[ H(0)H'(0)-\frac{H(0)}{n-1}\la A'(0),\gamma \ra_\gamma  -\la A'(0),\ringA (0) \ra_\gamma \ri] \dggz \\
=& \ \int_\Sigma \phi\lf[ \frac{n-2}{n-1}H(0)H'(0)  -\la A'(0),\ringA (0) \ra_\gamma  \ri]  \dggz ,
\end{split}
\eee
where we also have used the fact  $g(t) |_{T (\Sigma) } = \gamma$.
\end{proof}

 \begin{lma}\label{l-2ndmean-2}
Let $ \{ g(t) \}_{ | t| < \epsilon } $ be a  smooth path of metrics with $ g(0) = g_0$.
Let $ h = g'(0)$.
   Let $\{x^1,\dots, x^{n-1}, x^n \} $ be local coordinates of $\Omega$ near  $\Sigma$ such that,
   when $x^n=0$, $\{ x^1, \ldots, x^{n-1} \}$ form local coordinates on $\Sigma$ and $\p_{x^n}=\nu_{g_0}$.
Then
\be \label{eq-dAt}
 A'_{\alpha \beta} (0)  =  - \frac12  \lf[ h_{n \beta; \alpha} + h_{\alpha n; \beta} - h _{\alpha \beta; n} \ri]
 +  \frac12 A_{\alpha \beta}(0) h_{nn} ,
\ee
where $1\le \a,\beta\le n-1$.
\end{lma}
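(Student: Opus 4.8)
The plan is to compute $A_{\alpha\beta}(t)$ directly in the given adapted coordinates and then differentiate in $t$. The starting point is the coordinate expression of the second fundamental form. With the shape-operator sign convention $A(U,W)=\la \nabla_U\nu,W\ra$ (so that $\Sigma$ has positive mean curvature when it is convex with respect to the outward normal, which is the convention forced by the claimed formula), one differentiates $\la\nu,\p_\beta\ra=0$ along $\Sigma$ to get $A_{\alpha\beta}=-\la \nu,\nabla_{\p_\alpha}\p_\beta\ra$, and hence, for \emph{any} metric $g$ on $\Omega$,
\[
A_{\alpha\beta}(g)=-\la \nu_g,\nabla_{\p_\alpha}\p_\beta\ra=-\frac{\Gamma^n_{\alpha\beta}(g)}{\sqrt{g^{nn}}},\qquad 1\le\alpha,\beta\le n-1,
\]
where I have used that the $g$-unit conormal to $\{x^n=0\}$ is $dx^n/\sqrt{g^{nn}}$, so that $g_{mk}\nu_g^k=\delta^n_m/\sqrt{g^{nn}}$ and the contraction with $\Gamma^m_{\alpha\beta}$ picks out the single term $\Gamma^n_{\alpha\beta}$. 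The essential observation is that this identity is valid for every $g=g(t)$, even though the frame is adapted to $g_0$ only: the sole place where the $t$-dependence of the normal \emph{direction} enters is the scalar factor $1/\sqrt{g(t)^{nn}}$.

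Next I differentiate this identity at $t=0$, producing two contributions. For the Christoffel term I use the standard first variation
\[
\frac{d}{dt}\Gamma^n_{\alpha\beta}\Big|_{t=0}=\frac12\,g^{nl}\lf(h_{\beta l;\alpha}+h_{\alpha l;\beta}-h_{\alpha\beta;l}\ri),
\]
with covariant differentiation taken with respect to $g_0$. For the normalization I use $\frac{d}{dt}g^{nn}|_{t=0}=-g^{nk}g^{nl}h_{kl}|_{t=0}$. Here the coordinate hypotheses at $t=0$ do the work: since $\p_{x^n}=\nu_{g_0}$ along $\Sigma$, the metric $g_0$ is block-adapted with $(g_0)_{nn}=1$ and $(g_0)_{n\alpha}=0$, so $g_0^{nn}=1$ and $g_0^{nl}=\delta^{nl}$. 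Consequently $\frac{d}{dt}g^{nn}|_{t=0}=-h_{nn}$, whence $\frac{d}{dt}(g^{nn})^{-1/2}|_{t=0}=\frac12 h_{nn}$, while the Christoffel variation collapses to $\frac12\lf(h_{n\beta;\alpha}+h_{\alpha n;\beta}-h_{\alpha\beta;n}\ri)$.

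Assembling these, and using $g_0^{nn}=1$ together with $\Gamma^n_{\alpha\beta}(0)=-A_{\alpha\beta}(0)$, I obtain
\[
A'_{\alpha\beta}(0)=-\frac{d}{dt}\Gamma^n_{\alpha\beta}\Big|_0-\Gamma^n_{\alpha\beta}(0)\,\frac{d}{dt}(g^{nn})^{-1/2}\Big|_0=-\frac12\lf(h_{n\beta;\alpha}+h_{\alpha n;\beta}-h_{\alpha\beta;n}\ri)+\frac12 A_{\alpha\beta}(0)\,h_{nn},
\]
which is exactly \eqref{eq-dAt}. I expect the only genuine obstacle to be bookkeeping rather than any deep difficulty: one must keep the sign convention for $A$ consistent (it fixes both the overall minus sign on the bracket and the sign of the $A_{\alpha\beta}(0)h_{nn}$ term), and one must remember that the frame is adapted to $g_0$ but \emph{not} to $g(t)$, so that the factor $1/\sqrt{g(t)^{nn}}$ truly varies and is responsible for the $\frac12 A_{\alpha\beta}(0)h_{nn}$ term. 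Dropping that factor, or differentiating $\Gamma^n_{\alpha\beta}$ in isolation, would omit precisely this term; everything else reduces to the two textbook variation formulas specialized by the block structure of $g_0$ along $\Sigma$.
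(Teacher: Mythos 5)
Your proof is correct and follows essentially the same route as the paper: both write $A_{\alpha\beta}(t)=-\Gamma^n_{\alpha\beta}(t)/\sqrt{g(t)^{nn}}$ (valid for every $t$ since the conormal direction $dx^n$ is metric-independent), then differentiate at $t=0$ using the standard first variation of the Christoffel symbols together with $\frac{d}{dt}(g^{nn})^{-1/2}|_0=\frac12 h_{nn}$, and finally invoke the block structure $g_0^{nl}=\delta^{nl}$ along $\Sigma$. The bookkeeping of signs and of the normalization factor matches the paper's computation exactly.
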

 \begin{proof} The outward unit normal $\nu_{g(t)} $ of $\Sigma$ in $(\Omega, g(t)) $ is given by
 $$
 \nu_{g_t} =\frac{\nabla_{g(t)}  x^n}{|\nabla_{g (t)} x^n|_{g(t)} }=\frac{g^{ni}} {\sqrt{g^{nn}}}\p_{x_i}.
 $$
 Let $\Gamma_{ij}^k$, $\nabla^t$ denote the Christoffel symbols, the
 covariant differentiation of $g(t)$, respectively.
 Then
 \bee
 A_{\a\beta} (t)
 = -\la \nu_{g(t)} ,\nabla^t_{\p_{x^\a}}\p_{x^\beta}\ra_{g(t)}
 = -\frac1{\sqrt{g^{nn}}}\Gamma_{\a\beta}^n .
 \eee
Therefore,
 \bee
 \begin{split}
 A_{\a\beta}'(0)=&-\frac12\frac1{\sqrt{g^{nn}}}g^{nj}\lf(h_{\a j;\beta}+h_{j\beta;\a}-h_{\a\beta;j}\ri)-\frac12\frac{g^{nj}g^{in}h_{ij}}{(g^{nn})^\frac32}\Gamma_{\a\beta}^n(0)\\
 =&-\frac12\lf(h_{\a n;\beta}+h_{n\beta;\a}-h_{\a\beta;n}\ri)+\frac12h_{nn}A_{\a\beta},
 \end{split}
 \eee
which gives \eqref{eq-dAt}.
 \end{proof}

\begin{prop}\label{p-2ndmean-1}
Suppose  $\phi$ is  a nontrivial function on $\Sigma$ such that
\begin{enumerate}
 \item [(i)] $\phi\, H_{g_0}$ is the boundary value of some $N\in \text{Ker}(D\mathcal{S}^*_{g_0})$;
  \item [(ii)] $\Sigma$ is umbilical in $ (\Omega, g_0)$ at every point $p\in \Sigma$ where $\phi(p)\neq0$.
\end{enumerate}
Then $ \frac{d}{dt}\mathcal{E}_\phi(g(t))|_{t=0} = 0 $ for any path $ \{ g(t) \} \subset \mathcal{M}^\lambda_\gamma$
with $ g(0) = g_0$.
\end{prop}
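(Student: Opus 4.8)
The plan is to evaluate the first variation through Lemma \ref{l-2ndmean-1}, then to show that hypotheses (i) and (ii) force each surviving term to vanish. For a path $\{g(t)\}\subset\M^\lambda_\gamma$ we have $g(t)|_{T(\S)}=\gamma$ for all $t$, so Lemma \ref{l-2ndmean-1} applies and gives
\[
\frac{d}{dt}\mathcal{E}_\phi(g(t))|_{t=0}=\int_\S\phi\lf[\frac{n-2}{n-1}H(0)H'(0)-\la A'(0),\ringA(0)\ra_\gamma\ri]\dggz .
\]
Two further constraints from membership in $\M^\lambda_\gamma$ will be used below: differentiating $g(t)|_{T(\S)}=\gamma$ shows that $h:=g'(0)$ satisfies $h|_{T(\S)}=0$, and differentiating $\mathcal{S}_{g(t)}\equiv\lambda n(n-1)$ shows that $D\mathcal{S}_{g_0}(h)=0$ on $\Omega$.

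First I would eliminate the term $\la A'(0),\ringA(0)\ra_\gamma$ using (ii). At any $p\in\S$ with $\phi(p)\ne 0$, umbilicity gives $\ringA(0)(p)=0$, so the inner product vanishes there; at points where $\phi(p)=0$ the prefactor $\phi$ annihilates the term. Hence $\phi\la A'(0),\ringA(0)\ra_\gamma\equiv 0$ pointwise on $\S$, and the first variation collapses to $\frac{n-2}{n-1}\int_\S\phi H(0)H'(0)\,\dggz$.

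It then remains to show this boundary integral vanishes, which is where (i) enters. I would invoke the integration-by-parts identity for the scalar curvature linearization used in the proof of Lemma \ref{lma-1stvar-1} (cf. \cite{MiaoTam08,MiaoShiTam09}),
\[
\int_\Omega w\,D\mathcal{S}_{g_0}(h)\,dV_{g_0}=\int_\Omega\la D\mathcal{S}^*_{g_0}(w),h\ra_{g_0}\,dV_{g_0}-2\int_\S w\,DH_{g_0}(h)\,\dggz ,
\]
valid whenever $h|_{T(\S)}=0$, and apply it with $w=N$, the static potential from (i). The left-hand side vanishes because $D\mathcal{S}_{g_0}(h)=0$, and the first term on the right vanishes because $N\in\mathrm{Ker}(D\mathcal{S}^*_{g_0})$; this leaves $\int_\S N\,DH_{g_0}(h)\,\dggz=0$. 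Since $N|_\S=\phi H_{g_0}$ by (i), $DH_{g_0}(h)=H'(0)$, and $H_{g_0}=H(0)$, this is exactly $\int_\S\phi H(0)H'(0)\,\dggz=0$, whence $\frac{d}{dt}\mathcal{E}_\phi(g(t))|_{t=0}=0$.

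I do not expect a genuine obstacle once Lemma \ref{l-2ndmean-1} and the above identity are available; the argument is a matter of combining them correctly. The points deserving care are purely bookkeeping: verifying that $h|_{T(\S)}=0$ and $D\mathcal{S}_{g_0}(h)=0$ hold for variations constrained to $\M^\lambda_\gamma$ (so that the identity is applicable and its interior terms drop out), and observing that the vanishing of $\phi\la A'(0),\ringA(0)\ra_\gamma$ is pointwise---this is precisely what lets hypothesis (ii) succeed with no control whatsoever on $A'(0)$.
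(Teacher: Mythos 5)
Your proposal is correct and follows essentially the same route as the paper: both apply Lemma \ref{l-2ndmean-1}, use hypothesis (ii) to annihilate $\phi\,\la A'(0),\ringA(0)\ra_\gamma$ pointwise, and then use hypothesis (i) to kill the remaining term $\int_\Sigma \phi H_{g_0}H'(0)\,\dggz$. The only (cosmetic) difference is that the paper identifies this remaining term as $\tfrac{n-2}{n-1}\tfrac{d}{dt}\mathcal{E}^{(1)}_{\phi H_{g_0}}(g(t))|_{t=0}$ and cites \cite[Th. 2.1]{MiaoShiTam09}, whereas you reprove the needed direction of that cited result inline via the integration-by-parts identity with $w=N$ --- which is exactly how that result is established, so the underlying argument is the same.
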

\begin{proof}
Condition  (ii) simply means $\ring A (0) = 0 $ wherever $\phi \neq 0$ along $ \Sigma$.
Hence, by Lemma \ref{l-2ndmean-1},
\bee
\begin{split}
\frac{d}{dt}\mathcal{E}_\phi(g(t))|_{t=0}
= & \  \frac{n-2}{n-1} \int_\Sigma \phi H_{g_0} H'(0)  \dggz \\
= & \  \frac{n-2}{n-1}  \frac{d}{dt}\mathcal{E}^{(1)}_{ \phi H_{g_0} } (g(t))|_{t=0} ,
\end{split}
\eee
where the functional  $ \mathcal{E}^{(1)}_{(\cdot)} (\cdot)$ is defined in Remark \ref{rmk-mt08}.
But
$$ \frac{d}{dt}\mathcal{E}^{(1)}_{ \phi H_{g_0} } (g(t))|_{t=0} = 0$$
by condition (i) and  \cite[Th. 2.1]{MiaoShiTam09}.
Hence the result follows.
\end{proof}

Next we want to show that the converse of  Proposition \ref{p-2ndmean-1} is also true.
This is proved in the next two lemmas.

\begin{lma}\label{l-2ndmean-3}
If $ \frac{d}{dt}\mathcal{E}_\phi(g(t))|_{t=0} = 0 $ for any path $ \{ g(t) \} \subset \mathcal{M}^\lambda_\gamma$
with $ g(0) = g_0$,
then  $\phi H_{g_0}$ is the boundary value of a function  in $ \mathrm{Ker}(D\mathcal{S}^*_{g_0})$.
\end{lma}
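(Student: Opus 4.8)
The plan is to extract information from the hypothesis $\frac{d}{dt}\mathcal{E}_\phi(g(t))|_{t=0}=0$ by testing against a judiciously chosen family of variations inside $\mathcal{M}^\lambda_\gamma$. Since every path in $\mathcal{M}^\lambda_\gamma$ fixes the induced boundary metric, Lemma \ref{l-2ndmean-1} applies and gives
\[
0 = \int_\Sigma \phi \left[ \frac{n-2}{n-1} H_{g_0} H'(0) - \langle A'(0), \ringA(0) \rangle_\gamma \right] d\gamma_{g_0}
\]
for every admissible variation. The strategy is to kill the second term $\langle A'(0),\ringA(0)\rangle_\gamma$ so that only the first-order mean curvature functional $\mathcal{E}^{(1)}_{\phi H_{g_0}}$ survives; then the desired conclusion follows from the known characterization \cite[Th. 2.1]{MiaoShiTam09} that $g_0$ is a critical point of $\mathcal{E}^{(1)}_{(\cdot)}$ precisely when the weight is the boundary value of an element of $\mathrm{Ker}(D\mathcal{S}^*_{g_0})$.

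\textbf{Constructing variations with prescribed normal data.} First I would invoke Lemma \ref{l-2ndmean-2}, which expresses $A'_{\alpha\beta}(0)$ in terms of the normal derivatives and the $nn$-component of $h=g'(0)$. The key observation is that, because paths in $\mathcal{M}^\lambda_\gamma$ only constrain $h|_{T(\Sigma)}=0$, I am free to prescribe the off-tangential components of $h$ near $\Sigma$ — in particular the terms $h_{n\beta}$, $h_{\alpha n}$, $h_{nn}$ and the transverse derivatives $h_{\alpha\beta;n}$ — subject only to realizing them as $g'(0)$ for some genuine path in $\mathcal{M}^\lambda_\gamma$. As in the proof of Theorem \ref{t-main-1}, one uses the positivity of the first Dirichlet eigenvalue (the proof of \cite[Proposition 1]{MiaoTam08}) to conformally correct an arbitrary symmetric tensor $\hat h$ with $\hat h|_{T(\Sigma)}=0$ into a path preserving constant scalar curvature while leaving the boundary jet essentially unchanged. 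The plan is to choose $\hat h$ so that $A'(0)$ matches a prescribed traceless symmetric tensor on $\Sigma$; by \eqref{eq-dAt} this amounts to solving for the appropriate combination of $h_{n\beta;\alpha}$ etc., which is linear and unobstructed since these boundary components are unconstrained.

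\textbf{Isolating the two terms.} With that freedom, I would run a two-step argument. Choosing variations for which $H'(0)=0$ (achievable by arranging the trace of $A'(0)$ to vanish while letting its traceless part be arbitrary), the identity collapses to $\int_\Sigma \phi\,\langle A'(0),\ringA(0)\rangle_\gamma\, d\gamma_{g_0}=0$ for all such traceless $A'(0)$; taking $A'(0)$ proportional to $\phi\,\ringA(0)$ then forces $\int_\Sigma \phi^2 |\ringA(0)|^2\, d\gamma_{g_0}=0$, whence $\ringA(0)=0$ wherever $\phi\neq0$ — this is condition (ii), but more to the point it shows the second term vanishes identically for \emph{every} admissible variation. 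Consequently the original identity reduces to
\[
0 = \frac{n-2}{n-1}\int_\Sigma \phi\, H_{g_0}\, H'(0)\, d\gamma_{g_0} = \frac{n-2}{n-1}\,\frac{d}{dt}\mathcal{E}^{(1)}_{\phi H_{g_0}}(g(t))|_{t=0}
\]
for all paths in $\mathcal{M}^\lambda_\gamma$, and \cite[Th. 2.1]{MiaoShiTam09} delivers that $\phi H_{g_0}$ is the boundary value of some $N\in\mathrm{Ker}(D\mathcal{S}^*_{g_0})$.

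\textbf{The main obstacle.} The delicate step is verifying that the prescribed boundary data for $A'(0)$ and for $H'(0)$ are genuinely \emph{independently realizable} by paths staying inside $\mathcal{M}^\lambda_\gamma$. One must confirm that the conformal scalar-curvature correction used to enforce $\mathcal{S}_{g(t)}=\lambda n(n-1)$ does not secretly couple $H'(0)$ to $\ringA(0)$ in a way that obstructs setting one to zero while the other is free; this is where the computation in Lemma \ref{l-2ndmean-2} together with the eigenvalue hypothesis must be used carefully, since the conformal factor contributes to $h_{nn}$ and hence, via \eqref{eq-dAt}, to $A'(0)$. Handling this decoupling cleanly — likely by first treating the traceless part of $A'(0)$ with tangentially-supported perturbations that leave the trace undisturbed — is the crux of the argument.
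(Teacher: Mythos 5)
Your overall strategy (reduce to the weighted mean curvature functional $\mathcal{E}^{(1)}_{\phi H_{g_0}}$ and invoke \cite[Th. 2.1]{MiaoShiTam09}) is the right one, but the route you take to get there has a genuine gap, and it is exactly the point you flag as ``the main obstacle'' without resolving it. You propose to first prove umbilicity by producing paths in $\mathcal{M}^\lambda_\gamma$ with $H'(0)=0$ and arbitrary traceless $A'(0)$, claiming this is ``achievable by arranging the trace of $A'(0)$ to vanish.'' It is not. Any path produced by the conformal correction of \cite[Proposition 1]{MiaoTam08} has $h=\frac{4}{n-2}v\,g_0+\hat h$ with $v=0$ on $\Sigma$, and by Lemma \ref{l-2ndmean-2} this gives $A'(0)=(\text{part prescribable via the boundary jet of }\hat h)+\frac{2}{n-2}v_{,n}\gamma$, hence $H'(0)=\mathrm{tr}_\gamma A'(0)$ contains the term $\frac{2(n-1)}{n-2}v_{,n}$. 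Here $v_{,n}$ is the normal derivative of the solution of a linear elliptic problem with zero Dirichlet data whose source involves $\hat h$ on all of $\Omega$; it is a nonlocal functional of $\hat h$ that you can neither prescribe nor set to zero by choosing boundary data. So ``arranging the trace of $A'(0)$ to vanish'' is not a linear-algebraic, unobstructed choice, and without condition (i) already in hand you have no way to discard the term $\int_\Sigma \phi H_{g_0} H'(0)\,d\gamma_{g_0}$ in the variational identity. Your two-step argument is therefore blocked at its first step.

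The paper's proof avoids this entirely by reversing the order of the two conclusions. For (i) one takes $\hat h$ with \emph{compact support} in $\Omega$: then all boundary contributions of $\hat h$ vanish, and by Lemma \ref{l-2ndmean-2} the only surviving term is the pure-trace piece $A'(0)=\frac{2}{n-2}v_{,n}\gamma$, so $\la A'(0),\ringA(0)\ra_\gamma=0$ \emph{automatically} --- no decoupling is needed, precisely because the uncontrollable conformal term only contributes to the trace. The hypothesis then says $\frac{d}{dt}\mathcal{E}^{(1)}_{\phi H_{g_0}}(g(t))|_{t=0}=0$ for exactly the class of variations used in the proof of \cite[Th. 2.1]{MiaoShiTam09}, which yields that $\phi H_{g_0}$ is the boundary value of an element of $\mathrm{Ker}(D\mathcal{S}^*_{g_0})$ (note one needs the \emph{proof}, not just the statement, of that theorem, since vanishing is only known for this restricted class of paths). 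Umbilicity is then proved afterwards, in Lemma \ref{l-2ndmean-4}, where condition (i) together with \cite[Th. 2.1]{MiaoShiTam09} kills the $\int_\Sigma \phi H_{g_0}H'(0)$ term for \emph{all} variations, and a boundary-supported $\hat h$ with $\hat h(\nu_{g_0},\nu_{g_0})=1$ produces $A'(0)$ whose traceless part is a negative multiple of $\eta\ringA(0)$, forcing $\ringA(0)=0$ where $\phi\neq 0$. If you want to salvage your write-up, the fix is exactly this reordering: prove (i) first with compactly supported perturbations, and only then attack (ii).
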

\begin{proof}
Let $\hat h$ be any  symmetric $(0,2)$  tensor with compact support in $\Omega$.
By \cite[Proposition 1]{MiaoTam08},  one can find a positive function $u(x,t) = u_t (x)$,  with  $ x \in \Omega$
   and $| t| < \delta  $   for some small $\delta$,
   such  that  $u_0  = 1$ on $ \Omega$, $u_t = 1 $ at $ \Sigma$ and
    $g(t)  : =u_t^{\frac 4{n-2}}(g_0+t \hat h) \in \mathcal{M}^\lambda_\gamma$.
   Let $ h = g'(0)$, then
   $ h=\frac4{n-2} v   g_0 +\hat h$, where $ v  = \frac{\p u}{\p t}|_{t=0}$.
 By Lemma \ref{l-2ndmean-1},  this path $\{g(t)\}$ satisfies
\be \label{eq-var-E}
\int_\Sigma \phi \lf[  \frac{n-2 }{n-1}H_{g_0} H'(0)-\la A'(0),
{\ringA (0)}   \ra_{\gamma}  \ri] d \gamma_{g_0} =0  .
\ee
We claim that, for such a variation, one indeed has
\be\label{e-2ndmean-1}
 \la A'(0), {\ringA (0)}\ra_\gamma  =0.
\ee
If  this is true, then \eqref{eq-var-E} shows
$  \frac{d}{dt}\mathcal{E}^{(1)}_{ \phi H_{g_0} } (g(t))|_{t=0} = 0 $
for this  path $ \{ g(t) \} $.
Therefore, by the same proof of \cite[Th. 2.1]{MiaoShiTam09}, the result follows.

To prove \eqref{e-2ndmean-1}, we note that, along $ \Sigma$,
\bee
h_{n\beta;\a}=\frac 4{n-2} v_{,\a} {g_0}_{n\beta}+\hat h_{n\beta;\a}=0
\eee
because $\hat h$ has compact support and ${g_0}_{n\beta}=0$. Here ``," denotes partial derivative.
 Similarly
\bee
h_{\a\beta;n}=\frac 4{n-2}v_{,n} {g_0}_{\a\beta}+\hat h_{\a\beta;n}=\frac 4{n-2}v_{,n} \gamma_{\a\beta},
\eee
\bee
h_{nn}=\frac 4{n-2} v {g_0}_{nn}+\hat h_{nn}=0
\eee
because $v =0$ at $\Sigma$. By Lemma \ref{l-2ndmean-2}, we have
$$
A_{\a\beta}'(0)=\frac 2{n-2}v_{,n} \gamma_{\a\beta}.
$$
Thus,
\bee
\la A'(0), \ringA  (0) \ra_\gamma =\frac 2{n-2} v_{,n} \la \gamma,\ringA (0)\ra_\gamma =0
\eee
as  $\ringA (0)$ is traceless.  Hence, \eqref{e-2ndmean-1} is true. This completes the proof.
\end{proof}

\begin{lma}\label{l-2ndmean-4}
Suppose $ \frac{d}{dt}\mathcal{E}_\phi(g(t))|_{t=0} = 0 $ for any path $ \{ g(t) \} \subset \mathcal{M}^\lambda_\gamma$
with $ g(0) = g_0$,
Then $\Sigma$ is umbilical at every point $p\in \Sigma$ with $\phi(p)\neq 0$.
\end{lma}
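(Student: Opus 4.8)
The plan is to isolate the traceless part of the second fundamental form in the first variation. By Lemma \ref{l-2ndmean-3}, the standing hypothesis already forces condition (i): $\phi H_{g_0}$ is the boundary value of some $N\in \mathrm{Ker}(D\mathcal{S}^*_{g_0})$. Hence, by \cite[Th. 2.1]{MiaoShiTam09}, $g_0$ is a critical point of $\mathcal{E}^{(1)}_{\phi H_{g_0}}(\cdot)$ in $\mathcal{M}^\lambda_\gamma$, that is $\int_\Sigma \phi H_{g_0} H'(0)\,\dggz = 0$ for every path $\{g(t)\}\subset \mathcal{M}^\lambda_\gamma$ with $g(0)=g_0$. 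Substituting this into the formula of Lemma \ref{l-2ndmean-1}, the hypothesis $\frac{d}{dt}\mathcal{E}_\phi(g(t))|_{t=0}=0$ collapses to
\[
\int_\Sigma \phi\,\la A'(0),\ringA(0)\ra_\gamma\,\dggz = 0 \quad \text{for all } \{g(t)\}\subset\mathcal{M}^\lambda_\gamma .
\]

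First I would fix a point $p\in\Sigma$ with $\phi(p)\neq 0$ and build a path in $\mathcal{M}^\lambda_\gamma$ whose variation selects $\ringA(0)$ near $p$. Working in the boundary coordinates of Lemma \ref{l-2ndmean-2}, I take a symmetric $(0,2)$ tensor $\hat h$ supported in a small neighborhood $U$ of $p$ on which $\phi$ has a fixed sign, with $\hat h_{\alpha\beta}=\hat h_{n\alpha}=\hat h_{nn}=0$ along $\Sigma$ and with prescribed transverse derivative $\partial_{x^n}\hat h_{\alpha\beta}|_\Sigma=\psi\,\ringA_{\alpha\beta}(0)$, where $\psi\ge 0$ is a bump function with $\psi(p)>0$ and $\mathrm{supp}\,\psi\subset U$; such a tensor is written down directly, e.g. $\hat h_{\alpha\beta}=x^n\chi(x^n)\psi\,\ringA_{\alpha\beta}(0)$ with $\chi$ a normal cutoff. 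As in the proof of Lemma \ref{l-2ndmean-3}, I invoke \cite[Proposition 1]{MiaoTam08} to conformally correct via $g(t):=u_t^{4/(n-2)}(g_0+t\hat h)$ with $u_0\equiv 1$ and $u_t\equiv 1$ on $\Sigma$, so that $g(t)\in\mathcal{M}^\lambda_\gamma$, giving $h:=g'(0)=\frac{4}{n-2}v\,g_0+\hat h$ with $v=\partial_t u|_{t=0}$ vanishing on $\Sigma$.

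The key computation is that, along $\Sigma$ and with these choices, all terms in Lemma \ref{l-2ndmean-2} except $h_{\alpha\beta;n}$ drop out: since the tangential components of $h$ vanish and $h_{n\beta}=h_{nn}=0$ on $\Sigma$, one checks $h_{n\beta;\alpha}=h_{\alpha n;\beta}=0$ on $\Sigma$, so $A'_{\alpha\beta}(0)=\frac12 h_{\alpha\beta;n}$. Moreover the conformal term contributes only a pure trace, $h_{\alpha\beta;n}|_\Sigma=\frac{4}{n-2}v_{,n}\gamma_{\alpha\beta}+\psi\,\ringA_{\alpha\beta}(0)$, whence $\la A'(0),\ringA(0)\ra_\gamma=\frac12\psi\,|\ringA(0)|^2_\gamma$ because $\ringA(0)$ is $\gamma$-traceless. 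Feeding this into the displayed identity yields $\int_\Sigma \phi\psi\,|\ringA(0)|^2_\gamma\,\dggz=0$. Since $\phi\psi$ has a fixed sign on $U$ and $\phi(p)\psi(p)\neq 0$, this forces $\ringA(0)(p)=0$, i.e. $\Sigma$ is umbilical at $p$.

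I expect the main obstacle to be the bookkeeping in this last computation: verifying that the boundary constraints $\hat h_{n\alpha}=\hat h_{nn}=0$ together with the conformal factor genuinely annihilate the normal-tangential and trace contributions to $A'(0)$, leaving a clean nonnegative multiple of $|\ringA(0)|^2$. The reduction of the mean-curvature term through Lemma \ref{l-2ndmean-3} and \cite{MiaoShiTam09} is what allows the argument to close, so it is essential that condition (i) be established first.
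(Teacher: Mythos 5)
Your proposal is correct, and its skeleton is the same as the paper's: reduce via Lemma \ref{l-2ndmean-3} and \cite[Th. 2.1]{MiaoShiTam09} to the identity $\int_\Sigma \phi\,\la A'(0),\ringA(0)\ra_\gamma\,\dggz=0$ for all admissible paths, then test this with a localized variation built from \cite[Proposition 1]{MiaoTam08} and compute $A'(0)$ from Lemma \ref{l-2ndmean-2}. The one genuine difference is the choice of test tensor. The paper takes $\hat h$ with $\hat h|_{T(\Sigma)}=0$, $\hat h(\nu_{g_0},w)=0$ and $\hat h(\nu_{g_0},\nu_{g_0})=1$ on $\Sigma$, weighted by a bump function $\eta$; the traceless second fundamental form then enters indirectly, through the Christoffel-symbol identity $h_{n\beta;\alpha}=\eta A_{\alpha\beta}(0)$ and the $h_{nn}$ term of Lemma \ref{l-2ndmean-2}, yielding $A'(0)=-\frac12\bigl[\eta A(0)-\frac{4}{n-2}v_{,n}\gamma\bigr]$ and hence $\int_\Sigma\phi\eta\,|\ringA(0)|^2_\gamma\,\dggz=0$ (up to sign and constant). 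You instead take $\hat h$ vanishing identically on $\Sigma$ but with prescribed normal jet $\partial_{x^n}\hat h_{\alpha\beta}|_\Sigma=\psi\,\ringA_{\alpha\beta}(0)$, so that $A'(0)=\frac12 h_{\alpha\beta;n}$ directly produces $\frac12\psi|\ringA(0)|^2_\gamma$ after pairing; your boundary computations ($h_{n\beta;\alpha}=h_{\alpha n;\beta}=h_{nn}=0$, Christoffel terms annihilated because all components of $h$ vanish on $\Sigma$) are correct. Your version is slightly more direct, since the target quantity $|\ringA(0)|^2$ is inserted by hand rather than extracted from the geometry of the normal perturbation, and it avoids any reliance on the relation between $\Gamma^n_{\alpha\beta}$ and $A_{\alpha\beta}$; the paper's version has the mild advantage that its test tensor is defined without reference to $\ringA(0)$ itself. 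Both close the argument the same way, from sign-definiteness of the localized integrand.
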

\begin{proof}
By Lemma \ref{l-2ndmean-3}, $\phi H_{g_0}$ is the boundary value of a function in $ \mathrm{Ker} (D \mathcal{S}^*_{g_0} )$.
Hence,  by \cite[Th. 2.1]{MiaoShiTam09},
\bee
\int_\Sigma \phi H_{g_0} H'(0) d \gamma_{g_0} =0
\eee
for any variation $\{ g(t) \} \subset \mathcal{M}_\gamma^K$.
Therefore, by Lemma \ref{l-2ndmean-1},
\be \label{eq-pf-df-5}
\int_\Sigma \phi \la A'(0), {\ringA} (0)    \ra_\gamma  \dggz  = 0
\ee
for all such paths.

Now let $p\in \Sigma$ with $\phi(p) > 0$ for instance.
Let $\eta\ge 0$ be a smooth function with support near $p$ such that $\eta(p)=1$ and $\eta(q)>0$ implies $\phi(q)>0$.
Let $\hat h$ be a symmetric $(0,2)$ tensor on $ \Omega$
such that $ \hat h |_{T (\Sigma) } = 0 $, $ \hat h( \nu_{g_0}, w) = 0 $ for all $ w$ tangential to $ \Sigma$
and  $\hat h (\nu_{g_0}, \nu_{g_0} ) = 1$
along $ \Sigma$.
By \cite[Proposition 1]{MiaoTam08},  there exists a positive function $u(x,t) = u_t (x)$,  with  $ x \in \Omega$
   and $| t| < \delta  $   for some small $\delta$,
   such  that  $u_0  = 1$ on $ \Omega$, $u_t = 1 $ at $ \Sigma$ and
    $$g(t)  : =u_t^{\frac 4{n-2}}(g_0+t  \eta \hat h) \in \mathcal{M}^\lambda_\gamma . $$
   Let $ h = g'(0)$, then
   $ h=\frac4{n-2} v   g_0 + \eta \hat h$, where $ v  = \frac{\p u}{\p t}|_{t=0}$.
At $ \Sigma$, we have
\be \label{e-2ndmean-2}
h_{n\beta;\a}= \eta \hat h_{n\beta;\a}
= -\eta\lf(\Gamma_{\a n}^k \hat h_{k\beta}+\Gamma_{\a\beta}^k\hat h_{nk}\ri)
= \eta A_{\a\beta} (0) ,
\ee
\be\label{e-2ndmean-3}
\begin{split}
h_{\a\beta;n}=&\frac{4  }{n-2} v_{,n} \gamma_{\a\beta}+\eta_{,n} \hat h_{\a\beta}+\eta\hat h_{\a\beta;n} \\
=& \frac{4  }{n-2} v_{,n} \gamma_{\a\beta}- \eta \lf(\Gamma_{n\a  }^k \hat h_{k\beta}+\Gamma_{\beta n }^k\hat h_{\a k}\ri)\\
=& \ \frac{4  }{n-2} v_{,n} \gamma_{\a\beta},
\end{split}
\ee
\be\label{e-2ndmean-4}
h_{nn}=\frac4{n-2} v +\eta =\eta.
\ee
Therefore, by Lemma \ref{l-2ndmean-2} and \eqref{e-2ndmean-2} -- \eqref{e-2ndmean-4},
\bee
\begin{split}
A'_{\a\beta}(0)=&-\frac12\lf[ 2 \eta A_{\a\beta} (0) -  \frac{4  }{n-2} v_{,n} \gamma_{\a\beta}\ri] +\frac12\eta A_{\a\beta}(0) \\
=&-\frac12\lf[  \eta A_{\a\beta} (0) -  \frac4{n-2} v_{,n} \gamma_{\a\beta}\ri] .
\end{split}
\eee
Plugging it to \eqref{eq-pf-df-5} gives
\bee
\begin{split}
0 =& \ \int_\Sigma\phi\la  \lf(   \eta A -  \frac{4 }{n-2}v_{,n} \gamma \ri),\ringA(0) \ra_\gamma  \dggz \\
=& \ \int_\Sigma\phi\eta\la  \ringA   ,\ringA\ra_\gamma  \dggz .
\end{split}
\eee
Since $\phi\eta\ge0$ and $\eta(p)\phi(p)>0$. We conclude  $\ringA(p)=0$.
\end{proof}

Theorem \ref{t-main-2} in the introduction now follows directly from Proposition \ref{p-2ndmean-1},
 Lemma \ref{l-2ndmean-3} and  Lemma \ref{l-2ndmean-4}.

We end this paper  with a proposition  relating Theorems \ref{t-main-1} and \ref{t-main-2}, followed by an example.

\begin{prop}\label{p-2ndmean}
Let $(\Omega^n, g_0)$ be a compact Riemannian manifold with  boundary $\Sigma$.
Suppose $X$ is a conformal Killing vector field on $(\Omega, g_0)$ such that $X$ does not
vanish along  $\Sigma$ and  is {normal} to $\Sigma$. Then every point in $\Sigma$ is umbilical and the mean curvature  $H_{g_0}$ of $\Sigma$ satisfies
\be \label{eq-H-final}
H_{g_0}=\frac{n-1}{n} \frac{\div_{g_0}X}{{ \la X,\nu_{g_0} \ra   }},
\ee
As a result,  if $\mathcal{S}(g_0)=\lambda n(n-1)$  and if
$ \frac{d}{dt} \mathcal{F}_X ( g(t) )|_{t=0} = 0 $ for all variation $\{ g(t) \}_{|t|< \epsilon} \subset \mathcal{M}_0^\lambda$,
then
$ \frac{d}{dt} \mathcal{E}_{{\phi}} ( g(t) )|_{t=0} = 0 $
for all variation $\{ g(t) \}_{|t|< \epsilon} \subset \mathcal{M}_\gamma^\lambda$, {where $\phi=\la X,\nu_0\ra$}.
\end{prop}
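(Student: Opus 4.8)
The plan is to establish the two assertions separately: first the pointwise geometric identity for the second fundamental form of $\Sigma$, and then the deduction for the functionals via Theorems \ref{t-main-1} and \ref{t-main-2}.

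For the geometric part, I would work in an adapted orthonormal frame $\{e_1,\dots,e_{n-1},e_n=\nu_{g_0}\}$ along $\Sigma$, with $e_1,\dots,e_{n-1}$ tangent to $\Sigma$, and set $\rho:=\la X,\nu_{g_0}\ra$, which is nowhere zero on $\Sigma$ since $X$ is normal and non-vanishing there. The key observation is that, because $X$ is normal to $\Sigma$, the tangential components $\eta_\alpha=\la X,e_\alpha\ra$ vanish identically along $\Sigma$. Differentiating the identity $\la X,e_\alpha\ra=0$ in a tangential direction $e_\beta$ (legitimate precisely because $X$ is normal along all of $\Sigma$, not merely at a point) and using metric compatibility gives $\eta_{\alpha|\beta}=-\la X,\nabla_{e_\beta}e_\alpha\ra=\rho A_{\alpha\beta}$, where the second fundamental form enters through $\la \nu_{g_0},\nabla_{e_\beta}e_\alpha\ra=-A_{\alpha\beta}$ (matching the sign convention fixed in Lemma \ref{l-2ndmean-2}). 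Feeding this, together with its symmetric counterpart $\eta_{\beta|\alpha}=\rho A_{\alpha\beta}$, into the conformal Killing equation \textbf{(a2)} evaluated on the pair $(e_\alpha,e_\beta)$ yields $2\rho A_{\alpha\beta}=\frac{2}{n}(\div_{g_0}X)\gamma_{\alpha\beta}$, i.e. $A_{\alpha\beta}=\frac{\div_{g_0}X}{n\rho}\gamma_{\alpha\beta}$. This already shows $\Sigma$ is umbilical at every point, and tracing over the $n-1$ tangential directions produces $H_{g_0}=\frac{n-1}{n}\frac{\div_{g_0}X}{\rho}$, which is \eqref{eq-H-final}.

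For the second part, I would first invoke Theorem \ref{t-main-1}(i): the hypothesis that $\mathcal{F}_X$ has vanishing first variation along all paths in $\mathcal{M}_0^\lambda$ is exactly equivalent to $\div_{g_0}X\in\mathrm{Ker}(D\mathcal{S}^*_{g_0})$. With $\phi=\la X,\nu_{g_0}\ra=\rho$ nowhere zero on $\Sigma$, the mean curvature formula from the first part gives $\phi H_{g_0}=\frac{n-1}{n}\div_{g_0}X$ on all of $\Sigma$. Since the kernel of $D\mathcal{S}^*_{g_0}$ is a linear space, $N:=\frac{n-1}{n}\div_{g_0}X$ lies in $\mathrm{Ker}(D\mathcal{S}^*_{g_0})$ and has boundary value $\phi H_{g_0}$; this verifies condition (i) of Theorem \ref{t-main-2}. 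Condition (ii) of that theorem holds because $\Sigma$ was shown to be umbilical at every point. Theorem \ref{t-main-2} then yields $\frac{d}{dt}\mathcal{E}_\phi(g(t))|_{t=0}=0$ for every path in $\mathcal{M}_\gamma^\lambda$, which is the asserted conclusion.

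There is no deep obstacle here; the entire weight of the argument rests on the first part, where the only subtle points are bookkeeping ones. One must use that $X$ is normal along all of $\Sigma$ (so that $\la X,e_\alpha\ra$ vanishes as a function on $\Sigma$ and may be differentiated tangentially), and one must adhere to the sign convention $A_{\alpha\beta}=-\la\nu_{g_0},\nabla_{e_\alpha}e_\beta\ra$ of Lemma \ref{l-2ndmean-2} so that the mean curvature in \eqref{eq-H-final} carries the correct sign. Once the identification $\eta_{\alpha|\beta}=\rho A_{\alpha\beta}$ is in place, umbilicity and the mean curvature formula follow immediately from \textbf{(a2)}, and the functional statement is a formal consequence of Theorems \ref{t-main-1} and \ref{t-main-2}.
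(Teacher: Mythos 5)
Your proof is correct and follows essentially the same route as the paper: an adapted orthonormal frame computation identifying $\eta_{\alpha|\beta}=\la X,\nu_{g_0}\ra A_{\alpha\beta}$ along $\Sigma$ (the paper gets the same identity by differentiating $\nu_{g_0}=X/\la X,\nu_{g_0}\ra$ tangentially, you by differentiating $\la X,e_\alpha\ra=0$; these are the same computation), followed by symmetrization and the conformal Killing equation \textbf{(a2)} to get umbilicity and \eqref{eq-H-final}, and then the formal combination of Theorem \ref{t-main-1}(i) with Theorem \ref{t-main-2} for the functional statement.
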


\begin{proof}
 By the assumption on $ X$,   $\nu_{g_0} =X/{\la X,\nu_{g_0} \ra }$ at $ \Sigma$.  Let $\{ e_i \}_{1 \le i \le n} $
be a local orthonormal frame with $e_n=\nu_{g_0}$ at points in $ \Sigma$.
Then the second fundamental form $A$ of $ \Sigma$  is given by
\bee
A_{\a\beta}
=  \la \nabla_{e_\a}\nu_{g_0} ,e_\beta\ra
=  \frac1{{\la X,\nu_{g_0} \ra}} \la \nabla_{e_\a}X,e_\beta\ra
=  \frac1{{\la X,\nu_{g_0} \ra}}\eta_{\beta|\a}
\eee
for $1\le \a, \beta\le n-1$, where  $\eta$ is the $1$-form dual to $X$ on  $(\Omega, g_0)$.
Hence,
$$
A_{\a\beta}=\frac12(A_{\a\beta}+A_{\beta\a})=\frac1{2{ \la X,\nu_{g_0} \ra}}(\eta_{\a|\beta}+\eta_{\beta|\a})=
\frac{\div_{g_0}X} {n{ \la X,\nu_{g_0} \ra}}\gamma_{\a\beta},
$$
which  proves the first statement of the proposition.

Next, suppose  $\mathcal{S}(g_0)=\lambda n(n-1)$  and
$ \frac{d}{dt} \mathcal{F}_X ( g(t) )|_{t=0} = 0 $ for all variation $\{ g(t) \}_{|t|< \epsilon} \subset \mathcal{M}_0^\lambda$.
Then, by Theorem \ref{t-main-1} (i), $ \div_{g_0} X \in \mathrm{Ker}( D \mathcal{S}^*_{g_0} )$.
Hence, by  \eqref{eq-H-final} and  Theorem \ref{t-main-2},
$ \frac{d}{dt} \mathcal{E}_{\phi} ( g(t) )|_{t=0} = 0 $ for all variation $\{ g(t) \}_{|t|< \epsilon} \subset \mathcal{M}_\gamma^\lambda$.
\end{proof}

\vh

\noindent {\bf Example}
{
Consider a warped product $ (\Omega^n, g) = ( I \times N , d r^2 + f^2(r) \bar g) $
where $I \subset \R^1$ is a closed  interval, $(N, \bar g)$ is a closed Riemannian  manifold $N$,
and $f$ is a smooth positive  function on $I$.
  Consider a vector field  $X=\phi(r)\p_r$, where $ \phi $ is any smooth function on $I$.
  The $1$-form dual to $X$ is   $\eta=\phi(r)dr$.
  Let $\{ x^i\}_{ 1\le i\le n-1}$ be local coordinates of $N$ and let $x^n=r$.
   The Christoffel  symbols $ \{ \Gamma_{ij}^n \}$  of $g$ are given by
 \bee
  \left\{
    \begin{array}{rlll}
      \Gamma_{ij}^n & = &   -ff'\bar g_{ij}, & \hbox{\ if $1\le i, j\le n-1$;} \\
     \Gamma_{ni}^n & =  & 0,\ \ &\hbox{\ if $1\le i\le n-1$;} \\
     \Gamma_{nn}^n & =  &  0. &
    \end{array}
  \right.
 \eee
Since $\bar g_{ij}=f^{-2}g_{ij}$, we have
\bee
\eta_{i|j}=\frac{\p \eta_i}{\p x^j}-\Gamma_{ij}^k\eta_k=\frac{\p \eta_i}{\p x^j}-\Gamma_{ij}^n\phi
=\left\{
   \begin{array}{ll}
    \frac{f'}f \phi g_{ij},  & \hbox{if $1\le i, j\le n-1$;} \\
     \phi', & \hbox{if $i=j=n$;} \\
     0, & \hbox{otherwise.}
   \end{array}
 \right.
\eee
 From this, one concludes that $\eta_{i|j}+\eta_{i|j}=\frac2n (\div_g X) g_{ij} $, $1 \le i, j \le n$,  if and only if $\phi'=(f'\phi)/f$,
 which is equivalent to $\phi=cf$ for some constant $c$. Now set $\phi=f$,  then $\div_g X=nf'$.
 Note that this $X$ is never zero on $\p \Omega$  and is normal to $\p \Omega$.
 Classification for $f'$ being in $\text{Ker}(D\mathcal{S}*_g)$ can be found in \cite{Kobayashi,KobayashiObata}.
}


\begin{thebibliography}{10}

\bibitem{ADM61} Arnowitt,  R.;  Deser, S., and Misner,  C. W.,
{\sl Coordinate invariance and energy expressions in general relativity}, Phys. Rev. (2) \textbf{122} (1961), 997--1006.

\bibitem{Ashtekar-Hansen}
Ashtekar, A.;  Hansen, R. O.,
{\sl A unified treatment of null and spatial infinity in general relativity. I. Universal structure, asymptotic symmetries, and conserved quantities at spatial infinity}, J. Math. Phys. \textbf{19} (1978), no. 7, 1542--1566.

\bibitem{Corvino}
Corvino,  J.,
{\sl Scalar curvature deformation and a gluing construction for the
Einstein constraint equations}, Comm. Math. Phys. \textbf{214} (2000), 137--189.

\bibitem{Chrusciel}
Chru\'sciel,  P.,  {\sl A remark on the positive-energy theorem}, Class. Quantum Grav. \textbf{3} (1986), L115--L121.

\bibitem{Chrusciel-Herzlich}
  Chru\'sciel, P.; Herzlich, M.,
{\sl The mass of asymptotically hyperbolic Riemannian manifolds}, Pacific J. Math. {\bf 212} (2003), no. 2, 231--264.


\bibitem{Herzlich} Herzlich, M., {\sl Computing asymptotic invariants with the Ricci tensor on asymptotically flat and hyperbolic manifolds}, arXiv:1503.00508.

\bibitem{Huang-ICCM} Huang, L.-H.,
{\sl On the center of mass in General Relativity,} in Fifth International Congress of Chinese mathematicians,
Part I, 2, AMS/IP Stud. Adv. Math., vol. 51, Amer. Math. Soc., Providence, RI, 2012, 575--591.

  \bibitem{Kobayashi} Kobayashi, O., {\sl A differential equation arising from scalar curvature function}, J. Math. Soc. Japan \text{34} (1982), no. 4, 665--675.

\bibitem{KobayashiObata} Kobayashi, O.; Obata, M., {\sl Conformally-flatness and static space-time},  Manifolds and Lie groups (Notre Dame, Ind., 1980),  pp. 197--206, Progr. Math., \textbf{14}, Birkh\"auser, Boston, Mass., 1981.


\bibitem{MiaoShiTam09}  Miao, P.; Shi, Y.-G, and Tam, L.-F., {\sl On geometric problems related to Brown-York and Liu-Yau quasilocal mass}, Comm. Math. Phys.  \textbf{298}  (2010),  no. 2, 437--459.

\bibitem{MiaoTam08}  Miao, P.; Tam, L.-F., {\sl On the volume functional of compact manifolds
with boundary with constant scalar curvature}, Calc. Var.,  (2009) \textbf{36}:141--171.

\bibitem{MiaoTam2015} Miao, P.; Tam, L.-F., {\sl Evaluation of the ADM mass and center of mass via the Ricci tensor},
Proc. Amer. Math. Soc. \textbf{144} (2016), no 2, 753--761.

\bibitem{MiaoTamXie16}
Miao, P.; Tam; L.-F.,  and Xie, N.-Q.,  {\sl Quasi-local mass integrals and the total mass},
arXiv:1510.07756.

\bibitem{WangWu2015} Wang, G.; Wu, J.,
{\sl Chern's magic form and the Gauss-Bonnet-Chern mass},
arXiv:1510.03036.

\end{thebibliography}
\end{document}